\documentclass[11pt]{amsart}

\usepackage[margin=1.15in]{geometry}
\usepackage{amsmath,amssymb,amsthm,mathtools}
\usepackage{microtype}
\usepackage{enumitem}
\usepackage[hidelinks]{hyperref}
\numberwithin{equation}{section}

\newtheorem{theorem}{Theorem}[section]
\newtheorem{proposition}[theorem]{Proposition}
\newtheorem{lemma}[theorem]{Lemma}
\newtheorem{corollary}[theorem]{Corollary}
\newtheorem{remark}[theorem]{Remark}

\newcommand{\Z}{\mathbb Z}
\newcommand{\R}{\mathbb R}
\newcommand{\T}{\mathbb T}
\newcommand{\one}{\mathbf 1}
\newcommand{\eps}{\varepsilon}
\newcommand{\cA}{\mathcal A}
\newcommand{\ip}[2]{\langle #1,#2\rangle}
\newcommand{\dist}{\operatorname{dist}}
\newcommand{\Aff}{\operatorname{Aff}}
\newcommand{\Span}{\operatorname{span}}
\newcommand{\e}{\mathrm e}

\title[Sharp $\ell^p$-Improving Estimates]
{Sharp $\ell^p$-Improving Estimates for Fixed-Radius
Discrete Spherical Averages}

\author{Rui Han}
\address{Department of Mathematics, Louisiana State University,
Baton Rouge, Louisiana 70803-4918, USA}
\email{rhan@lsu.edu}

\author{Fan Yang}
\address{Department of Mathematics, Louisiana State University,
Baton Rouge, Louisiana 70803-4918, USA}
\email{yangf@lsu.edu}

\date{}

\begin{document}

\begin{abstract}

Let $d\geq 4$ and let $R>0$. When $d=4$, assume that $R^2\in\mathbb{N}\setminus 4\mathbb{N}$; when $d\geq 5$, let $R^2\in\mathbb{N}$ be arbitrary. We prove the fixed-radius estimate 
$$\|A_R f\|_{\ell^{p'}(\mathbb{Z}^d)}\leq C_{d,p,\varepsilon}R^{-d(2/p-1)+\varepsilon}\|f\|_{\ell^p(\mathbb{Z}^d)}$$ for $(d+2)/d\leq p\leq 2$, where $p'$ is the Hölder conjugate exponent of $p$ and $A_R$ is the probability average over the lattice sphere of radius $R$. This extends the fixed-radius estimates of Kesler--Lacey and Hughes to the sharp lower endpoint $p=(d+2)/d$.

\end{abstract}

\maketitle

\section{Introduction}

For $n=(n_1,\ldots,n_d)\in\Z^d$, let
\[
 |n|:=\bigl(n_1^2+\cdots+n_d^2\bigr)^{1/2}.
\]
For $R>0$ with $R^2\in\mathbb N$, define the discrete sphere with radius $R$:
\[
 S_R:=\{n\in\Z^d:|n|=R\}.
\]

In dimensions $d\geq5$, one has the uniform estimate
\begin{equation}\label{eq:sphere-cardinality}
 |S_R|\asymp_d R^{d-2}.
\end{equation}
This estimate goes back to Hardy's circle-method formula for representations
as sums of five or more squares \cite{Hardy}; see also
\cite{MSW,HughesSparse} for modern formulations in the present normalization.

In dimension four, Jacobi's four-square formula gives
\begin{equation}\label{eq:four-square-cardinality}
 |S_R|=8\sum_{\substack{a\mid R^2\\4\nmid a}}a.
\end{equation}
Consequently, if $R^2\in\mathbb N\setminus4\mathbb N$, then
\begin{equation}\label{eq:four-square-lower}
 |S_R|\geq8R^2.
\end{equation}
There is no uniform upper bound for $|S_R|/R^2$ over these levels.  Indeed,
if $R^2$ is squarefree and odd, then
\[
 \frac{|S_R|}{R^2}
 =8\prod_{p\mid R^2}\left(1+\frac1p\right),
\]
which is unbounded along products of the first odd primes.  For
\eqref{eq:four-square-cardinality}, see
\cite[Theorem~1]{Hirschhorn}; see also \cite{Grosswald}.

Let $A_R$ be the probability average
\[
 A_Rf(x):=\frac{1}{|S_R|}\sum_{n\in S_R}f(x-n),
\]
and introduce the circle-method normalization
\begin{equation}\label{eq:calA-def}
 \cA_Rf(x):=R^{2-d}\sum_{n\in S_R}f(x-n).
\end{equation}

For $d\geq5$, \eqref{eq:sphere-cardinality} makes estimates for $A_R$ and
$\cA_R$ equivalent up to constants depending only on $d$.  In dimension
four,
\[
 A_R=\frac{R^2}{|S_R|}\cA_R\leq \frac{1}{8} \cA_R,
\]
whenever $R^2\notin4\mathbb N$, but the reverse comparison is not uniform.
We therefore use $\cA_R$ in the circle-method and incidence arguments,
transfer the restricted endpoint estimate to $A_R$ by this one-sided
comparison, and perform the final interpolation directly for $A_R$.

\begin{theorem}\label{thm:main}
Let $d\geq4$ and let $R>0$.  If $d=4$, assume that
$R^2\in\mathbb N\setminus4\mathbb N$; if $d\geq5$, assume only that
$R^2\in\mathbb N$.  Let
\[
 \frac{d+2}{d}\leq p\leq2.
\]
Then for every $\eps>0$,
\begin{equation}\label{eq:main-estimate}
 \|A_Rf\|_{\ell^{p'}(\Z^d)}
 \lesssim_{d,p,\eps}
 R^{-d(\frac2p-1)+\eps}\|f\|_{\ell^p(\Z^d)}.
\end{equation}
The implicit constant is uniform over all $R^2$ satisfying the
stated condition.  The exponent and the lower endpoint are sharp, see
Remark~\ref{rem:sharpness} below.
\end{theorem}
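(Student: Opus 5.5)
The plan is to interpolate between a trivial $\ell^2$ bound and a restricted weak-type estimate at the endpoint $p_0=(d+2)/d$. At $p=2$ we have $p'=2$ and $\|A_R\|_{\ell^2\to\ell^2}\le 1$, which is \eqref{eq:main-estimate} with exponent $0$. At the other end, note that $p_0'=(d+2)/2$ and $d(2/p_0-1)=d(d-2)/(d+2)$. We aim to prove the restricted weak-type bound
\[
 \ip{\cA_R\one_F}{\one_G}\lesssim_{\eps} R^{-\frac{d(d-2)}{d+2}+\eps}\,|F|^{d/(d+2)}|G|^{d/(d+2)}
\]
for all finite sets $F,G\subset\Z^d$. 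Here $1/p_0=d/(d+2)$ and $1-1/p_0'=d/(d+2)$, so the exponents on $|F|$ and $|G|$ coincide. By $A_R\le\frac18\cA_R$ (for $d=4$), or by \eqref{eq:sphere-cardinality} (for $d\ge5$), the same bound holds for $A_R$. We then apply Marcinkiewicz/real interpolation between this bound and the $\ell^2$ bound, carried out directly for $A_R$. This yields strong type for every $p$ strictly inside $(p_0,2)$. To reach $p_0$ itself we absorb the $\eps$-loss: interpolating a restricted estimate with slightly worse exponent $R^\eps$ for every $\eps$ gives the claimed $R^{-d(2/p-1)+\eps}$ on the closed range. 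Alternatively, we prove restricted bounds at nearby points and use the fact that the bound is off by $R^\eps$ only.

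For the endpoint, we use the circle-method decomposition of the kernel $R^{2-d}\one_{S_R}$. Write its Fourier multiplier as $\sum_{q\le R}\sum_{(a,q)=1} e(-aR^2/q)\,G(a/q)\,\widetilde{d\sigma_R}(\xi-b/q)\,\psi_q(\xi-b/q)$ plus an error term. Here $G$ is the normalized Gauss sum of size $q^{-d/2}$, and $\widetilde{d\sigma_R}$ is the continuous sphere multiplier with decay $(R|\xi|)^{-(d-1)/2}$. The error is bounded by $R^{-(d-2)/2+\eps}$ in $\ell^2\to\ell^2$ (Kloosterman refinement; in $d=4$ this needs $R^2\notin4\mathbb N$ and gives the $\eps$-loss). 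In physical space, the contribution of modulus $q$ is a kernel supported near $S_R$ of $\ell^\infty$ size about $q^{-d/2}\cdot q^{d}\cdot R^{2-d}\cdot R^{-1}$ per $1$-neighbourhood, suitably averaged.

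We then split by set sizes, in the style of Kesler--Lacey/Hughes. If $|F|,|G|$ are small relative to a threshold $R^{\kappa}$, we use the incidence bound
\[
 \ip{\cA_R\one_F}{\one_G}\le R^{2-d}\min(|F|,|G|)\sup|S_R\cap\cdot|,
\]
together with counts of lattice points on spheres intersected with small sets, namely $|S_R\cap(S_R+h)|\lesssim R^{d-3+\eps}$. Cauchy--Schwarz applied to $\sum_x\one_G(x)\sum_n\one_F(x-n)$ then gives a bound with $|F||G|^{1/2}$-type gains. For large sets, we use the decomposition: the $\ell^2$ error gives $R^{-(d-2)/2+\eps}|F|^{1/2}|G|^{1/2}$, and the major-arc pieces with $q\le Q$ are handled by $\ell^1\to\ell^\infty$ bounds of order $R^{-1}Q^{d/2+1}$-ish and $\ell^2$ bounds $Q^{-d/2+1}$. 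We optimize $Q$ against $|F||G|$ and check that each regime is dominated by $R^{-d(d-2)/(d+2)}(|F||G|)^{d/(d+2)}$; the critical balance occurs exactly at $|F|\approx|G|\approx R^{d}$.

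The main obstacle is the intermediate regime. There the incidence bound and the circle-method bounds each fall short of the endpoint exponent, and one must show that the $\ell^1\to\ell^\infty$ bound of the $q$-pieces, summed over $q$, has no loss beyond $R^\eps$ while still matching the $\ell^2$ gain. I would first try to prove a dyadic-in-$q$ estimate $\|\text{piece}_q\|_{\ell^1\to\ell^\infty}\lesssim R^{2-d}\,q^{d/2}R^{(d-2)/2}\cdot R^{-(d-2)/2}$ and interpolate each dyadic block separately between its $\ell^1\to\ell^\infty$ and $\ell^2$ norms. The sum over blocks is then a geometric series balanced at $p_0$, which is exactly where the endpoint $(d+2)/d$ should come out, and where the logarithmic count of blocks costs only $R^\eps$.
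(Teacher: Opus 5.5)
Your reduction to a restricted weak-type bound for $\cA_R$ at $p_0=(d+2)/d$, and its transfer to $A_R$, agree with the paper. One fix is needed there: interpolating only against $\ell^2$ never gives strong type at $p_0$ itself. You also need $\|A_R\|_{\ell^1\to\ell^\infty}=|S_R|^{-1}\lesssim R^{2-d}$ to move past $p_0$ along the conjugate line, and then return to $p_0$ with $\ell^2$.

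The genuine gap is the endpoint restricted estimate, and it has two parts.

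First, the circle-method scheme you describe is the Kesler--Lacey/Hughes scheme, which stops at $p=(d+1)/(d-1)$. That scheme bounds each modulus, or each dyadic block of moduli, in $\ell^1\to\ell^\infty$ and in $\ell^2$, then interpolates. Suppose the low-frequency kernel $\sum_{q\le N}\check C^1_{R,q}$ is bounded modulus by modulus using $|c_q|\le\phi(q)$. One gets $N^2R^{-d}$, hence $R^{-d}\ip{\cA_R\one_E}{\one_F}\lesssim N^2\delta+N^{-(d-3)/2}\delta^{1/2}$. Its optimum $\delta^{(d-1)/(d+1)}$ is strictly worse than the required $\delta^{d/(d+2)}$. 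With your block bound $R^{2-d}q^{d/2}$ things are worse still. The interpolated bound for the $q=1$ block is already $R^{-(d-2)^2/(d+2)}$, larger than the target $R^{-d(d-2)/(d+2)}$, and the block bounds do not decrease in $Q$. So there is no series balanced at $p_0$.

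The missing idea is the paper's cumulative refinement:
\begin{itemize}
\item sum all $q\le N$ together with the narrow cutoff $\Phi_{Rq/N}$;
\item identify the kernel exactly as $c_q(|n|^2-R^2)(\check\Phi_{Rq/N}*d\sigma_R)(n)$;
\item use $\sum_{q\le N}|c_q(m)|/q\lesssim R^\eps$ for $0<|m|\lesssim R^2$, which is why one localizes to cubes of side $\sim R$;
\item absorb the exact-sphere part, which is at most $CN^2R^{-2}\cA_R\one_E$, into the left-hand side.
\end{itemize}
This gives $N\delta+N^{-(d-3)/2}\delta^{1/2}$ and hence the endpoint. But it works only for $\delta\ge R^{-\kappa_d}$ with $\kappa_d=(d-3)(d+2)/(d-2)$, because $N\le cR$ and the high-frequency bound is $N^{-(d-3)/2}$. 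Your error bound $R^{-(d-2)/2+\eps}$ is stronger than the $R^{(3-d)/2+\eps}$ the paper takes from Hughes. Even granting it, improving $\mathcal E_R$ alone would not move this threshold, since the moduli $N<q\le R$ already contribute $N^{-(d-3)/2}$ in the available bound.

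Second, below this density the circle method is exhausted, and your Cauchy--Schwarz/second-moment argument does not close. Take $d=6$ and $|E|=|F|=R^{3-\eta}$ with small $\eta>0$, just below the threshold $|E||F|=R^{2d-\kappa_6}=R^6$. The bound $\sum_x r_E(x)^2\lesssim R^{4}|E|+R^{3+\eps}|E|^2$ gives only $I_R(E,F)\lesssim R^{6-3\eta/2+\eps}$. The target $R^{2(d-2)/(d+2)}(|E||F|)^{d/(d+2)}$ equals $R^{11/2-3\eta/2}$. Relatedly, the critical regime is not $|F|\approx|G|\approx R^d$: full density is the easy case, handled with $N=1$.

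The paper covers the low-density range with higher integer moments of $r_E$. It uses the $(d+2)/2$-th moment for even $d$, and the $(d+3)/2$-th moment for odd $d$ together with the balance between $|E|$ and $|F|$. Configurations on a common radius-$R$ sphere are counted by affine rank. The remaining points lie on the circumsphere $\Sigma_B$ and the centers on $X_B$, and both are controlled by the uniform lattice-point bound for embedded spheres. In $d=4$, the window $R^2<|E|\le|F|$, $|E||F|<R^5$ still escapes the third moment and needs Mudgal's point--sphere incidence theorem.

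Without the absorbed Ramanujan-sum estimate and these rank-sensitive moment bounds, your outline does not reach $p=(d+2)/d$.
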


Throughout the proof of Theorem~\ref{thm:main}, the dimension and radius
satisfy the hypotheses above.

\begin{remark}[Sharpness of the exponent and range]
\label{rem:sharpness}
Let $L=\lceil R\rceil$ and set
\[
 B=[-3L,3L]^d\cap\Z^d,
 \qquad B_0=[-L,L]^d\cap\Z^d.
\]
Then $A_R\one_B=1$ on $B_0$, and therefore
\[
 \|A_R\|_{\ell^p\to\ell^{p'}}
 \gtrsim_d
 \frac{|B_0|^{1/p'}}{|B|^{1/p}}
 \asymp_d R^{-d(2/p-1)}.
\]
Thus the exponent $d(2/p-1)$ in \eqref{eq:main-estimate} cannot be
replaced by any larger exponent uniformly in $R$, at any $p$ in the
stated range.

The following point-mass example forces the lower endpoint for $p$.  For
$d\geq5$, \eqref{eq:sphere-cardinality} gives $|S_R|\asymp R^{d-2}$.
In dimension four, the lower bound
\eqref{eq:four-square-lower} alone does not suffice for this example, so
we take $R^2$ to run through the odd primes.
Then \eqref{eq:four-square-cardinality} gives
$|S_R|=8(R^2+1)\asymp R^2$.
Thus, in either case, along an admissible
sequence of radii,
\[
 A_R\delta_0=|S_R|^{-1}\one_{S_R},
 \qquad
 \|A_R\delta_0\|_{\ell^{p'}}
 =|S_R|^{-1/p}
 \asymp R^{-(d-2)/p}.
\]
If
\[
 p<\frac{d+2}{d}
 \qquad\text{and}\qquad
 0<\eps<\frac{d+2}{p}-d,
\]
then
\[
 R^{d(2/p-1)-\eps}
 \|A_R\delta_0\|_{\ell^{p'}}
 \asymp
 R^{(d+2)/p-d-\eps}
 \longrightarrow\infty.
\]
Hence estimates with the decay in \eqref{eq:main-estimate}, for every
$\eps>0$, cannot extend below $p=(d+2)/d$.  At this endpoint, the large-box
and point-mass examples force exactly the same power of $R$.

In dimension four, some restriction on the radii is necessary for the
normalized averages $A_R$.  If $R=2^j$ with $j\geq1$, then
\eqref{eq:four-square-cardinality} gives $|S_R|=24$, so
$\|A_R\delta_0\|_{\ell^3}=24^{-2/3}$ has no decay as $j\to\infty$.
This is the arithmetic obstruction reflected in the restriction to
$\Lambda_4$ in Kesler--Lacey \cite{KeslerLacey}.

\end{remark}

\begin{corollary}[Fixed-distance incidences]\label{cor:fixed-distance-incidence}

Let $d\geq4$ and let $R>0$.  If $d=4$, assume that
$R^2\in\mathbb N\setminus4\mathbb N$; if $d\geq5$, assume only that
$R^2\in\mathbb N$.  Let $E,F\subset\mathbb Z^d$ be finite, and define
\[
 I_R(E,F):=\#\{(e,f)\in E\times F:|e-f|=R\}.
\]
Then for every $\eps>0$,
\begin{equation}\label{eq:fixed-distance-incidence}
 I_R(E,F)
 \lesssim_{d,\eps}
 R^{\frac{2(d-2)}{d+2}+\eps}
 (|E||F|)^{\frac d{d+2}}.
\end{equation}
In particular,
\[
 I_R(E,E)
 \lesssim_{d,\eps}
 R^{\frac{2(d-2)}{d+2}+\eps}|E|^{\frac{2d}{d+2}}.
\]

\end{corollary}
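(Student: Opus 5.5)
We deduce Corollary~\ref{cor:fixed-distance-incidence} from the endpoint case $p=(d+2)/d$ of Theorem~\ref{thm:main} by a duality argument, applied to indicator functions. First we write the incidence count as a bilinear form in the unnormalized average:
\[
 I_R(E,F)=\sum_{x\in\Z^d}\one_E(x)\sum_{n\in S_R}\one_F(x-n)
 =|S_R|\,\ip{A_R\one_F}{\one_E}.
\]
Next we bound the pairing by H\"older's inequality with exponents $p'$ and $p$, and then apply \eqref{eq:main-estimate}:
\[
 \ip{A_R\one_F}{\one_E}\le \|A_R\one_F\|_{\ell^{p'}}\|\one_E\|_{\ell^p}
 \lesssim_{d,\eps} R^{-d(\frac2p-1)+\eps}|F|^{1/p}|E|^{1/p}.
\]

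Now we specialize to the endpoint $p=(d+2)/d$. There $1/p=d/(d+2)$ and
\[
 d\Bigl(\frac2p-1\Bigr)=d\cdot\frac{2d-(d+2)}{d+2}=\frac{d(d-2)}{d+2}.
\]
Combining the two displays gives
\[
 I_R(E,F)\lesssim |S_R|\,R^{-\frac{d(d-2)}{d+2}+\eps}(|E||F|)^{\frac d{d+2}}.
\]
For $d\ge5$, the bound \eqref{eq:sphere-cardinality} gives $|S_R|\lesssim R^{d-2}$, and the power of $R$ becomes
\[
 (d-2)-\frac{d(d-2)}{d+2}=\frac{2(d-2)}{d+2},
\]
which is exactly \eqref{eq:fixed-distance-incidence}.

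The main obstacle is dimension four. There $|S_R|/R^2$ is unbounded, so multiplying the $A_R$ estimate by $|S_R|$ loses too much. The way around this is that \eqref{eq:four-square-cardinality} implies the divisor bound
\[
 |S_R|\le 8\sigma(R^2)\lesssim_\eps R^{2+\eps},
\]
since $\sigma(m)\lesssim_\eps m^{1+\eps}$. The extra $R^{\eps}$ is then absorbed by renaming $\eps$.

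A more intrinsic alternative is to use the estimate for the circle-method normalization $\cA_R$. The introduction indicates that the proof of Theorem~\ref{thm:main} establishes the restricted endpoint estimate for $\cA_R$ directly. Since
\[
 I_R(E,F)=R^{d-2}\ip{\cA_R\one_F}{\one_E},
\]
that bound gives the corollary with no cardinality input at all. Note that only restricted weak type is needed, since only indicator functions appear.

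Finally, the diagonal statement for $I_R(E,E)$ follows by taking $F=E$.
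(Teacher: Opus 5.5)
Your argument is correct, but your main route differs from the paper's. The paper proves the corollary in one line from the global restricted estimate \eqref{eq:global-restricted} for $\cA_R$, using $I_R(E,F)=R^{d-2}\ip{\cA_R\one_E}{\one_F}$. That is exactly your ``more intrinsic alternative.''

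Your primary route is different. You treat Theorem~\ref{thm:main} (strong type, for $A_R$) as a black box, apply H\"older at $p=(d+2)/d$, and multiply back by $|S_R|$. This needs an upper bound on $|S_R|$: \eqref{eq:sphere-cardinality} for $d\geq5$, and the divisor bound $|S_R|\leq 8\sigma(R^2)\lesssim_\eps R^{2+\eps}$ in dimension four. The route is legitimate, since Theorem~\ref{thm:main} is proved without using the corollary, and your exponent arithmetic is correct.

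The trade-off is as follows.
\begin{itemize}
\item The paper's route uses only the restricted weak-type bound, needs no cardinality information, and comes logically before the interpolation in Section~11.
\item Your route depends on that strong-type upgrade (or at least on \eqref{eq:restricted-lorentz-probability}). In $d=4$ it also needs extra arithmetic input to undo the normalization: it recovers the factor $|S_R|/R^2$ that the paper discarded when passing from $\cA_R$ to $A_R$ via $A_R\leq\frac18\cA_R$.
\end{itemize}

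Your remark that multiplying by $|S_R|$ ``loses too much'' in dimension four overstates the problem. Your own divisor bound shows the loss is only subpolynomial: $|S_R|/R^2\lesssim\log\log R$. It is therefore harmlessly absorbed into $R^\eps$.
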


This is the number of edges in the bipartite distance graph determined by
$E$ and $F$, where two vertices are joined precisely when they are separated
by the prescribed distance $R$.
The estimate has the correct scale, up to the $R^\eps$ loss.  For $d\geq5$,
taking $E=\{0\}$ and $F=S_R$ gives
$I_R(E,F)=|S_R|\asymp R^{d-2}$, while the right-hand side has the same power
of $R$.  The same conclusion holds in dimension four for odd-prime values of $R^2$.  
The estimate also controls
popular centers.  Indeed, if
\[
 r_E(x):=\#\{e\in E:|x-e|=R\},
 \qquad
 F_t:=\{x\in\Z^d:r_E(x)\geq t\},
\]
then $t|F_t|\leq I_R(E,F_t)$, and \eqref{eq:fixed-distance-incidence} implies
\[
 |F_t|
 \lesssim_{d,\eps}
 R^{d-2+\eps}|E|^{d/2}t^{-(d+2)/2}.
\]
Thus there cannot be too many centers whose radius-$R$ spheres contain many
points of $E$.

The surrounding discrete spherical maximal theory begins with Magyar's work
and the theorem of Magyar--Stein--Wainger, with the restricted weak endpoint
due to Ionescu \cite{MagyarMaximal,MSW,IonescuEndpoint}.  Subsequent work
treated sparse families of radii, maximal improving and sparse estimates,
lacunary radii, and sparse domination for the full maximal operator
\cite{HughesSparse,KeslerMaximal,KLMLacunary,KLM}.

For the fixed-radius averages considered here, Kesler--Lacey proved the
estimate
\[
 \|A_Rf\|_{\ell^{p'}}
 \lesssim_{d,p,\omega(R^2)}
 R^{-d(2/p-1)}\|f\|_{\ell^p},
 \qquad
 \frac{d+1}{d-1}<p\leq2,
\]
with no $R^\eps$ factor, but with a constant depending on
$\omega(R^2)$, the number of distinct prime factors of $R^2$
\cite[Theorem~1.1]{KeslerLacey}.  Hughes proved the
corresponding estimate for
$(d+1)/(d-1)\leq p\leq2$, with an $R^\eps$ loss and a constant uniform in
$R^2$ \cite[Theorem~1.1]{Hughes}.

In dimension four, Kesler--Lacey allow
$R^2\in\mathbb N\setminus4\mathbb N$
\cite[Theorem~1.1]{KeslerLacey}, whereas Hughes restricts to odd $R^2$
\cite[Theorem~1.1]{Hughes}.  Theorem~\ref{thm:main} reaches the
sharp endpoint $p=3/2$ throughout the Kesler--Lacey class
$R^2\in\mathbb N\setminus4\mathbb N$.

Discrete $\ell^p$-improving estimates have also been developed for several
other arithmetic averaging families.  Han--Lacey--Yang proved a local
scale-free estimate for averages along the square integers in the sharp
range $3/2<p\leq2$, up to the endpoint, together with sparse bounds for the
associated maximal operator \cite{HanLaceyYangSquares}.
Han--Krause--Lacey--Yang obtained scale-free improving estimates and sparse
bounds for averages along the primes
\cite{HanKrauseLaceyYangPrimes}; endpoint estimates, including logarithmic
density bounds and corresponding sparse bounds, were subsequently obtained
by Lacey--Mousavi--Rahimi \cite{LaceyMousaviRahimi}.  Giannitsi proved
scale-free improving estimates for divisor-function-weighted averages for
$1<p<2$, together with sparse bounds for the associated maximal operator
\cite{GiannitsiDivisor}.

Han--Kova\v{c}--Lacey--Madrid--Yang proved improving estimates for
integer-valued polynomial averages, using Vinogradov mean-value estimates
for the higher-degree moment curve \cite{HanKovacLaceyMadridYang}.  Dasu--Demeter--Langowski obtained the sharp
$\ell^p$-improving range for the discrete paraboloid in every dimension at
least two \cite{DasuDemeterLangowski}.  Dendrinos--Hughes--Vitturi obtained
subcritical restricted weak-type estimates for discrete polynomial curves
\cite{DendrinosHughesVitturi}.

Related arithmetic techniques also appear in the theory of discrete
fractional integrals and singular Radon transforms
\cite{SteinWaingerFractional,IonescuWainger,PierceWeyl}, while related
sparse methods for discrete singular operators appear in
\cite{CuliucKeslerLacey}.

\noindent\textbf{Restricted weak-type formulation.}
At the endpoint $p=(d+2)/d$, the desired global restricted estimate is
\begin{equation}\label{eq:global-restricted}
 \langle\cA_R\one_E,\one_F\rangle
 \lesssim_{d,\eps}
 R^{-\frac{d(d-2)}{d+2}+\eps}
 (|E||F|)^{\frac d{d+2}}
\end{equation}
for finite sets $E,F\subset\Z^d$.  This estimate may be localized to cubes
of sidelength comparable to $R$.  Indeed, partition $\Z^d$ into half-open
cubes $Q_j$ of sidelength $\lceil R\rceil$, let $Q_j^*$ be fixed
enlargements that contain every point at distance $R$ from $Q_j$, and set
\[
 E_j:=E\cap Q_j^*,\qquad F_j:=F\cap Q_j.
\]
The sets $F_j$ partition $F$, the cubes $Q_j^*$ have bounded overlap, and
the support of the spherical kernel gives
\[
 \langle\cA_R\one_E,\one_F\rangle
 =
 \sum_j\langle\cA_R\one_{E_j},\one_{F_j}\rangle.
\]
Moreover, H\"older's inequality and bounded overlap imply
\[
 \sum_j
 |E_j|^{\frac d{d+2}}
 |F_j|^{\frac d{d+2}}
 \lesssim_d
 (|E||F|)^{\frac d{d+2}}.
\]
It therefore suffices to fix one such pair $Q\subset Q^*$, take
\[
 E\subset Q^*,\qquad F\subset Q,
\]
and prove the corresponding local estimate.

This localization is used essentially in the circle-method part of the
proof.  If $x\in F\subset Q$ and $y\in E\subset Q^*$, then
$|x-y|\lesssim_dR$, and hence
\[
 \bigl||x-y|^2-R^2\bigr|\lesssim_d R^2.
\]
This is precisely what allows the divisor factor arising from the nonzero
Ramanujan sums to be absorbed into an $R^\eps$ loss; see Lemma~\ref{lem:ramanujan-sum}.  The localization also
makes $|E|/R^d$ and $|F|/R^d$ genuine bounded local densities.

Set the product density of $E$ and $F$ to be
\begin{equation}\label{eq:local-density-def}
 \delta:=\frac{|E||F|}{R^{2d}}.
\end{equation}

\begin{proposition}[Local restricted endpoint]\label{prop:local-endpoint}
Under this local setup, for every $\eps>0$,
\begin{equation}\label{eq:local-target-intro}
 R^{-d}\langle\cA_R\one_E,\one_F\rangle
 \lesssim_{d,\eps}
 R^\eps\delta^{\frac d{d+2}}.
\end{equation}
\end{proposition}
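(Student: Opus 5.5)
We will prove \eqref{eq:local-target-intro} by combining the circle-method decomposition of $\cA_R$ with an incidence-type bound for the error term, and then optimizing a cutoff parameter against $\delta$. There are three trivial bounds. The first is $R^{-d}\langle\cA_R\one_E,\one_F\rangle\le R^{-d}\|\cA_R\|_{\ell^1\to\ell^\infty}\min(|E|,|F|)\lesssim R^{-2}\min(|E|,|F|)$, using $|S_R|\lesssim R^{2+\eps}$ when $d=4$, which follows from the divisor bound in \eqref{eq:four-square-cardinality}. The second is the $\ell^2$ bound $\lesssim \delta^{1/2}$, which comes from $\|\cA_R\|_{2\to2}\lesssim R^{2-d}|S_R|\lesssim R^\eps$. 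The third is the large-density regime $\delta\gtrsim 1$, where the claim is immediate.

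For the main step we decompose $\cA_R$ following Magyar--Stein--Wainger. We write the kernel $R^{2-d}\one_{S_R}$ via the Fourier integral over the circle $e^{2\pi i\alpha(|n|^2-R^2)}$ and apply a Kloosterman/major-arc decomposition at height $N$, with $1\le N\le R$. The main term is then
\[
 \cA_R=\sum_{q\le N}\sum_{(a,q)=1}\e(-aR^2/q)\,G(a/q)\text{-weighted multipliers}+\mathcal E_N .
\]
We will need two bounds. For the \emph{error} we aim for $\|\mathcal E_N\|_{\ell^2\to\ell^2}\lesssim R^\eps N^{1-d/2}$, using Gauss sum decay $|G(a/q)|\lesssim q^{-d/2}$ and Kloosterman refinement so that the sum over $q>N$ converges; for $d=4$ this is borderline, and the Kloosterman gain handles it. For the \emph{main term} we bound it in $\ell^1\to\ell^\infty$. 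Its kernel at $m=x-y$ is, up to a smooth continuous-sphere factor of size $R^{-d}$ on $|m|\lesssim R$, the truncated singular series $\sum_{q\le N}q^{-d}\sum_a G(a,q)^{\cdot}\e(a(|m|^2-R^2)/q)$. Using Ramanujan sums, Lemma~\ref{lem:ramanujan-sum} bounds each $q$-block by $q^{1-d/2}\gcd(q,|m|^2-R^2)$-type factors. Since the localization gives $0<||m|^2-R^2|\lesssim R^2$ when the difference is nonzero, the divisor sum is $\lesssim R^\eps N$ (or $\lesssim N^{\,\max(0,3-d/2)}R^\eps$), and when $|m|^2=R^2$ the sum is bounded. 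This yields the main kernel bound $\lesssim R^{-d+\eps}N$.

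We then combine the two bounds:
\[
 R^{-d}\langle\cA_R\one_E,\one_F\rangle\lesssim R^{\eps}\bigl(R^{-2d}N|E||F|+ R^{-d}N^{1-d/2}(|E||F|)^{1/2}\bigr)=R^\eps\bigl(N\delta+N^{1-d/2}\delta^{1/2}\bigr).
\]
Balancing the two terms gives $N^{d/2}=\delta^{-1/2}$, i.e.\ $N=\delta^{-1/d}$, and then the total is $\delta^{1-1/d}$, which is weaker than $\delta^{d/(d+2)}$. This signals that the main-term bound must be sharper. The correct main-term estimate should be $\lesssim R^{-d+\eps}$ uniformly in $N$, the point being that the nonzero Ramanujan contributions are absorbed by the divisor bound. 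The error should be $\|\mathcal E_N\|_{2\to2}\lesssim R^\eps N^{-d/2+1}$ restricted to the appropriate frequency dyadic pieces. More precisely, we plan to decompose dyadically in $q\sim 2^s$: each block contributes $\min\bigl(2^{s(2-d/2)}\delta^{1/2},\,2^{s}\delta\bigr)$-type bounds, via $\ell^2$ for the Gauss-sum decay and via $\ell^1\to\ell^\infty$ from the kernel bound. Summing the minimum over $s$ gives $\delta^{(d/2-1)/(d/2)}\cdot$ a geometric factor, and after redoing this with the exponents ($2^{s(1-d/2)}$ from $\ell^2$ per $q$-block times $2^s$ fractions) the crossover should give precisely $\delta^{d/(d+2)}$: solving $2^{s}\delta=2^{-sd/2}\delta^{1/2}$ gives $2^{s}=\delta^{-1/(d+2)}$, hence the value $\delta^{1-1/(d+2)}$. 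This still needs checking. The target exponent $d/(d+2)$ corresponds to a per-block $\ell^1\to\ell^\infty$ bound $2^{2s}\delta$ against the $\ell^2$ bound $2^{-s(d-2)/2}\delta^{1/2}$; equating these gives $2^{s(d+2)/2}=\delta^{-1/2}$ and the value $\delta^{1-2/(d+2)}=\delta^{d/(d+2)}$. So the concrete plan is to prove, for each dyadic block $\cA_{R,s}$, the two estimates $\|\cA_{R,s}\|_{\ell^1\to\ell^\infty}\lesssim R^{-d+\eps}2^{2s}$ and $\|\cA_{R,s}\|_{\ell^2\to\ell^2}\lesssim R^{\eps}2^{-s(d-2)/2}$, and then sum $\min$ over $s$, with the tail $2^s\gtrsim R$ handled by the minor-arc/Kloosterman bound.

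We expect the main obstacle to be the $\ell^1\to\ell^\infty$ kernel bound for a single block uniformly in $R^2$. Each block contributes $2^{s}$ values of $q$, each carrying $\sum_a$ of $\varphi(q)$ Gauss sums of size $q^{-d/2}$ times a Ramanujan sum $c_q(|m|^2-R^2)$. Lemma~\ref{lem:ramanujan-sum} together with $||m|^2-R^2|\lesssim R^2$ bounds the sum over $q\sim2^s$ of $|c_q(k)|\le\gcd(q,k)$ by $2^{s}\tau(k)\lesssim 2^sR^\eps$, which gives a kernel size $R^{-d}2^{s(2-d/2)}R^\eps\le R^{-d+\eps}2^{2s}$. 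In $d=4$ the condition $4\nmid R^2$ is what controls the $2$-adic factor of the Gauss sums, and this is needed to make the $\ell^2$ bound uniform.
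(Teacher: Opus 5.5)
\textbf{There is a genuine gap: a purely circle-method argument cannot reach the endpoint in the low-density range, and the paper's proof needs a separate geometric argument there.}

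Even the paper's sharpened circle-method bound $R^{\eps}(N\delta+N^{-(d-3)/2}\delta^{1/2})$, valid for $1\le N\le cR$, gives $\delta^{d/(d+2)}$ only when $\delta\ge R^{-\kappa_d}$, where $\kappa_d=(d-3)(d+2)/(d-2)$. The reason is that the major arcs stop at $q\le R$. The $\ell^2$ bound actually available above level $N$ is $R^{\eps}N^{-(d-3)/2}$: this comes from the Weil bound $q^{-(d-1)/2+\eps}\rho_R(q)$ summed over $q>N$, together with $\|\mathcal E_R\|_{2\to2}\lesssim R^{(3-d)/2+\eps}$. At $N\sim R$ the term $R^{-(d-3)/2}\delta^{1/2}$ exceeds $\delta^{d/(d+2)}$ exactly when $\delta<R^{-\kappa_d}$. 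The trivial bound $I_R(E,F)\le|E||F|$ only covers $\delta\le R^{-(d+2)}$, so the window $R^{-(d+2)}<\delta<R^{-\kappa_d}$ stays open.

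Your per-block bound $R^{\eps}2^{-s(d-2)/2}$ is half a power of $2^s$ better than Weil/Kloosterman gives. Even granting it for $2^s\le R$, the tail you propose to handle ``by the minor-arc/Kloosterman bound'' would have to be $O(R^{-(d-2)/2+\eps})$ in $\ell^2$ at $\delta\approx R^{-(d+2)}$. That is square-root cancellation, of size $|S_R|^{1/2}$, in the Fourier transform of $\one_{S_R}$ minus its major-arc approximation, and it is not available.

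The missing idea is the incidence-geometric treatment of this window:
\begin{enumerate}
\item Bound the integer moments $\sum_x r_E(x)^j$ by sorting configurations $(x,e_1,\dots,e_j)$ by the affine rank of $\{e_i\}$.
\item Count the remaining points on the circumsphere $\Sigma_B$, and the centers on the common-center sphere $X_B$, using uniform lattice-point bounds for embedded spheres.
\item Apply H\"older with $j=(d+2)/2$ for even $d$. For odd $d$, use $j=(d+3)/2$, keep the extra power of $|F|$, and use $|E|\le|F|$ together with $|E||F|<R^{2d-\kappa_d}$.
\item In $d=4$, the window $R^2<|E|\le|F|$, $|E||F|<R^5$ additionally needs Mudgal's point--sphere incidence bound.
\end{enumerate}

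\textbf{The analytic part also contains concrete errors.}

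First, on the exact sphere $|m|=R$ the Ramanujan sums are $c_q(0)=\phi(q)$. So $\sum_{q\le N}\phi(q)/q\asymp N$ is not bounded, and the low-frequency kernel there has size $N^2/R^d$. Use this $\ell^1\to\ell^\infty$ bound against the correct $\ell^2$ bound $N^{-(d-3)/2}$ and balance: you get only $\delta^{(d-1)/(d+1)}$, which is strictly weaker than $\delta^{d/(d+2)}$. Your bookkeeping lands on $d/(d+2)$ only because of the unjustified extra factor $N^{-1/2}$.

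The paper handles this differently. It observes that the on-sphere part is bounded by $C_d(N^2/R^2)\cA_R\one_E(x)$. For $N\le cR$ this is a small multiple of the positive quantity being estimated, so it can be absorbed into the left side. That improves the low-frequency term to $R^{\eps}N\delta$.

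Second, the physical-space kernel of the $q$-th main term is $c_q(|m|^2-R^2)$ times a smoothed sphere; no Gauss sums appear there. The smoothed factor is $\lesssim N/(qR^d)$ only for the narrowed cutoff $\Phi_{Rq/N}$. With the standard cutoff $\Phi_q$ it has size $1/(qR^{d-1})$ on a $q$-neighbourhood of the sphere. Your block bound $R^{-d+\eps}2^{2s}$ is therefore not justified when $2^{2s}<R$.
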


\noindent\textbf{Strategy and new ingredients.}
The proof combines a circle-method estimate above an explicit density
threshold with rank-sensitive geometric counting estimates in the
complementary low-density range.

The circle-method part of the argument begins with a fixed-radius refinement
of the low-frequency estimate.  Away from the exact lattice sphere, we use
cancellation in the Ramanujan sums; on the exact sphere, the remaining
contribution is a small multiple of the original positive spherical average
and can be absorbed.  The underlying separation and exact-sphere absorption
mechanism appeared in the variable-radius sparse-maximal argument of
Kesler--Lacey--Mena \cite{KLM}.  Here we develop a cumulative
fixed-radius version, summing all moduli $q\leq N$ together.
Combined with the high-frequency $\ell^2$ estimate, this proves
\eqref{eq:local-target-intro} whenever the local product density $\delta$ is not too small.

The complementary low-density range is handled by geometric incidence counting. The broader strategy of reducing restricted weak-type estimates for positive averaging operators to geometric or combinatorial incidence estimates is classical; see Christ's work \cite{ChristRefinements}.  For curved hypersurface averages, including the spherical case, Schlag cast
restricted weak-type estimates into a continuum-incidence framework and
controlled the measure of the set of admissible centers through
simplex-volume estimates \cite{SchlagContinuum}.

The low-density argument in the present paper shares this broad
incidence-counting philosophy, but the configurations being counted and the
mechanism used to control them are different.  We study integer moments of
the fixed-distance incidence function $r_E$.  Expanding such a moment
produces configurations of lattice points lying on a common radius-$R$
sphere.  These configurations are organized by affine rank: after a
basis $B$ is fixed, the remaining points lie on its circumsphere
$\Sigma_B$, while the admissible centers lie on the common-center sphere
$X_B$.

The same rank-sensitive integer-moment estimate underlies both parity cases,
but it enters the incidence estimate differently.  When $d$ is even, the
natural endpoint exponent $(d+2)/2$ is an integer, and H\"older's inequality
produces exactly the required power $|F|^{d/(d+2)}$.  In dimensions
$d\geq6$, the resulting moment estimate covers the full low-density range
after interchanging $E$ and $F$. When $d=4$, the third moment gives
the desired bound when the smaller set has size $\lesssim R^2$, but failure
of the circle-method density condition only yields $|E||F|\lesssim R^5$.
We handle the remaining window using Mudgal's point--sphere incidence estimate
\cite{Mudgal}.

When $d$ is odd, the natural exponent $(d+2)/2$ is half-integral.  For odd
$d\geq7$, it can be treated directly, at the cost of an additional
square-root estimate. In dimension five, however, the half-integral approach does not close and would require a separate argument. To avoid such a dimension-specific treatment, we instead use the next integer moment, $(d+3)/2$.
H\"older's inequality at this exponent produces a power of
$|F|$ slightly larger than the endpoint power.  We therefore retain the
factor involving $|F|$ and estimate it jointly with the affine-rank
contributions to the integer moment, using the balance between $E$ and
$F$ and the low-density product bound.  This yields the desired estimate
for every odd dimension $d\geq5$, with dimension five as the critical
top-rank case.

\par\medskip
\noindent\textbf{Proof structure.}
The proof of Proposition~\ref{prop:local-endpoint} is organized around four
components.
\begin{enumerate}[
 label=\textup{\Alph*.},
 leftmargin=2.2em,
 itemsep=0.35em
]
\item \emph{The circle-method estimate above the density threshold.}
Theorem~\ref{thm:analytic-density} combines the refined low-frequency
estimate with the high-frequency $\ell^2$ estimate and proves
\eqref{eq:local-target-intro} whenever
\[
 \delta\geq R^{-\frac{(d-3)(d+2)}{d-2}}.
\]

\item \emph{The even-dimensional low-density estimate.}
Theorem~\ref{thm:geometric-density} proves the endpoint moment estimate and
the resulting incidence bound in even dimensions $d\geq4$ when the smaller
set has size at most a fixed multiple of $R^{d/2}$.

\item \emph{The four-dimensional bridge estimate.}
Theorem~\ref{thm:d4-bridge} uses Mudgal's point--sphere incidence bound to
cover the remaining four-dimensional low-density window.

\item \emph{The odd-dimensional low-density estimate.}
Theorem~\ref{thm:odd-geometric-density} proves the desired incidence bound
in every odd dimension $d\geq5$.
\end{enumerate}

These four components cover all density regimes with the same threshold
constant.  If the circle-method condition fails, then
\[
 \delta<R^{-\kappa_d},
 \qquad
 |E||F|<R^{2d-\kappa_d}.
\]
When $d=4$, one has $\kappa_4=3$, and hence
\[
 |E||F|<R^5.
\]
After interchanging $E$ and $F$ if necessary,
Theorem~\ref{thm:geometric-density} applies when $|E|\leq R^2$, while
Theorem~\ref{thm:d4-bridge} applies when $|E|>R^2$.

For even $d\geq6$, one has
\[
 \kappa_d-d=\frac{d-6}{d-2}\geq0,
\]
and hence
\[
 |E||F|<R^d,
 \qquad
 \min\{|E|,|F|\}<R^{d/2}.
\]
Thus Theorem~\ref{thm:geometric-density} applies after interchanging $E$ and
$F$ if necessary.  In odd dimensions,
Theorem~\ref{thm:odd-geometric-density} applies to the product bound above.

\noindent\textbf{Organization of the paper.}

Section~2 provides the preliminary notation and the lattice-point estimate
for embedded spheres.  Section~3 introduces the circle-method decomposition,
establishes the high-frequency estimate, and derives the exact
Ramanujan-sum description of the low-frequency kernel.  Section~4 proves
the refined low-frequency estimate.  Section~5 combines these analytic
estimates to prove the endpoint bound above the circle-method density
threshold.

Section~6 develops the common-sphere geometry and proves the
rank-sensitive integer-moment estimate used in both parity cases.
Section~7 proves the even-dimensional low-density estimate.  Section~8
proves the four-dimensional bridge estimate, and Section~9 proves the
odd-dimensional low-density estimate.

Section~10 combines the analytic and geometric estimates to prove the local
restricted endpoint and then deduces the corresponding global restricted
estimate.  Section~11 passes from the restricted endpoint to strong type,
interpolates throughout the range ${d+2}/{d}\leq p\leq2$, and completes the
proofs of Theorem~\ref{thm:main} and
Corollary~\ref{cor:fixed-distance-incidence}.

\section{Preliminaries}

We write
\[
 \e(t):=e^{2\pi i t},\qquad \e_q(t):=\e(t/q).
\]
For $f\in\ell^1(\Z^d)$, our Fourier transform is
\[
 \widehat f(\xi)=\sum_{x\in\Z^d}f(x)\e(-x\cdot\xi),
 \qquad \xi\in\T^d,
\]
and the inverse Fourier coefficient of a multiplier $m$ is
\[
 \check m(n)=\int_{\T^d}m(\xi)\e(n\cdot\xi)\,d\xi.
\]
For a function or finite measure on $\R^d$, the same notation is used with the
Euclidean Fourier transform.  We write
\[
 \tau(m):=\sum_{r\mid m}1
\]
for the divisor function, $\mu$ for the M\"obius function, and $\phi$ for
Euler's totient function.

We require the following uniform lattice-point estimate.

\begin{lemma}[Lattice points on embedded spheres]\label{lem:embedded-sphere}
Let $1\leq k\leq d-1$, and let $\Sigma$ be a nondegenerate
$k$-dimensional Euclidean sphere embedded in $\R^d$, with arbitrary center
and affine span, and with actual radius $\rho>0$.  Then for every $\eps>0$,
\begin{equation}\label{eq:embedded-sphere-actual}
 \#(\Sigma\cap\Z^d)
 \lesssim_{d,\eps}1+\rho^{k-1+\eps}.
\end{equation}
In particular, if $\rho\leq R$ and $R\geq1$, then
\begin{equation}\label{eq:embedded-sphere-bound}
 \#(\Sigma\cap\Z^d)\lesssim_{d,\eps}R^{k-1+\eps}.
\end{equation}
For a radius-zero section or a zero-dimensional sphere, the corresponding
count is $O(1)$.
\end{lemma}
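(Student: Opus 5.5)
Our plan is to reduce to counting integer points on a sphere in a rational affine subspace, and then to induct on $k$ with the circle ($k=1$) and the $2$-sphere ($k=2$) as base cases.

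\emph{Reduction to a rational subspace.} Let $V=\Aff(\Sigma)$ be the $(k+1)$-dimensional affine span of $\Sigma$, with center $c\in V$. If $\Sigma\cap\Z^d$ spans an affine subspace $W$ of dimension less than $k+1$, then $\Sigma\cap W$ is a lower-dimensional sphere of radius at most $\rho$ (or a point), and we conclude by induction on $k$. So we may assume that the lattice points span $V$. Then $V$ is rational, and $\Lambda:=(V-x_0)\cap\Z^d$ is a lattice of rank $k+1$ for any lattice point $x_0\in\Sigma$. The center $c$ lies in $V$ and is determined by linear equations with integer coefficients, namely equidistance from $k+2$ affinely independent lattice points. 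Hence $c$ is rational with respect to $\Lambda$, though its denominator may be large.

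\emph{Counting as a quadratic form problem.} Choose a basis of $\Lambda$ and write $x=x_0+\sum t_iv_i$. The condition $|x-c|^2=\rho^2$ becomes $Q(t)+L\cdot t=0$, where $Q$ is a positive definite integral quadratic form in $k+1$ variables and $L$ is integral. Completing the square with a denominator gives $Q(t')=N$ for a suitable integer shift $t'$ and integer $N$. The solution set is an ellipsoid, and we must bound its integer points by $\rho^{k-1+\eps}$, uniformly in the form.
\begin{itemize}
\item For $k=1$ (a binary form), the number of representations by a positive binary form is at most $\tau$-type, namely $\lesssim N^\eps$. The ellipse also has Euclidean radius $\rho$, and the lattice points lie on a circle of radius $\rho$. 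A cleaner route is the classical Bombieri--Pila/Jarník argument: any arc of length $\lesssim\rho^{1/3}$ on a circle of radius $\rho$ contains at most two lattice points. We prefer the divisor bound in the Gaussian-type order, which gives $\lesssim_\eps(\rho^2\cdot\mathrm{disc})^\eps$. Here one must control the discriminant, and this is the delicate point.
\item The uniform approach we would actually take is a slicing induction. For $k\ge2$, slice $\Sigma$ by the hyperplanes $\{x: x\cdot u=j\}$, where $u$ is a primitive integer vector in the direction of the smallest successive minimum. Each slice is a $(k-1)$-sphere of radius at most $\rho$. The number of slices meeting $\Sigma$ is $\lesssim 1+\rho|u|$, and induction then gives $(1+\rho)\rho^{k-2+\eps}$, which equals $\rho^{k-1+\eps}$ provided we choose slicing directions so that the slice count is $\lesssim 1+\rho$.
\end{itemize}

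\emph{Choosing the slicing direction.} If the lattice points span $V$ and some coordinate direction $e_i$ is not orthogonal to $V$, we slice by $x_i=j$. At most $2\rho+1$ integer values of $j$ occur, since $x_i$ varies by at most $2\rho$ on $\Sigma$. Each slice $\Sigma\cap\{x_i=j\}$ is a sphere of dimension $k-1$ (or a point, or empty) with radius at most $\rho$. Some $e_i$ is always non-orthogonal to $V-V$, so this works. Induction then gives $\#\lesssim (1+2\rho)(1+\rho^{k-2+\eps})$, which is $\lesssim 1+\rho^{k-1+\eps}$ once $k\ge2$.

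\emph{Base case $k=1$.} This is the main obstacle. We need a bound of $\rho^\eps$ for lattice points on an arbitrary circle in $\R^d$, which is harder than the planar case. Here the rational plane is $V$, with $\Lambda$ of rank $2$, and the circle gives a binary form equation $Q(t')=N$ with $N\lesssim \rho^2\cdot\mathrm{disc}(\Lambda)\cdot(\text{denominator})^2$. Representations of $N$ by a positive binary form of discriminant $D$ number at most $\lesssim\tau(N)\cdot w$, with $w\le6$, which is $\lesssim_\eps N^\eps$. The worry is that $N$ might be huge compared with $\rho$. To handle this, we apply the Bombieri--Pila bound $\lesssim\rho^{\eps}$ for lattice points on a strictly convex curve, or alternatively Jarník's observation. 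The Jarník argument goes as follows: three lattice points on a circle of radius $\rho$ in $\Lambda$ span a triangle of area at least $\tfrac12\mathrm{covol}(\Lambda)\ge\tfrac12$. This forces arcs of length $\gtrsim\rho^{1/3}$ between clusters, but it gives only $\rho^{2/3}$ in general. So we would combine the two: if $N\le\rho^{C}$, the divisor bound gives $\rho^{\eps}$. Otherwise the denominator is large, and we argue that any two lattice points on the circle then differ by a vector whose length forces very few points. If this step resists, we fall back on the finiteness statement $\#\lesssim_\eps (N D)^\eps$ and bound $D$ and the denominator polynomially in $\rho$, using $k+2$ spanning lattice points inside a ball of radius $\rho$; Cramer's rule bounds all heights by $\rho^{O(d)}$. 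Since the polynomial bound $N\le\rho^{O_d(1)}$ then holds automatically, this closes the base case with loss $\rho^{\eps}$.

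Zero-dimensional and radius-zero spheres contain at most two points, which gives the $O(1)$ claim. The bound \eqref{eq:embedded-sphere-bound} follows because $1+\rho^{k-1+\eps}\lesssim R^{k-1+\eps}$ for $\rho\le R$ and $R\ge1$.
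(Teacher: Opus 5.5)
Your argument is correct in its final form, but it takes a different route from the paper. The paper does not prove the lemma: it quotes Lemma~4 of Huang--Zhang for $\rho>1$ and uses lattice separation for $\rho\leq1$. You give a self-contained proof instead.

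Your slicing step is clean. Slice by $x_i=j$ for a coordinate $e_i$ not orthogonal to the direction space of the span. Then at most $2\rho+1$ integers $j$ occur, and each slice is a $(k-1)$-sphere of radius at most $\rho$. So each dimension costs exactly one factor of $\rho$, and the lemma reduces uniformly to circles.

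For circles, the decisive argument is the fallback you reach only at the very end. If a circle carries three lattice points $x_0,x_1,x_2$, put $u_i=x_i-x_0$ and let $G$ be their Gram matrix. Its entries are integers bounded by $4\rho^2$, and $\det G\leq16\rho^4$. By Cramer's rule, every lattice point of the plane and the center have coordinates in $(2\det G)^{-1}\Z^2$ relative to $x_0,u_1,u_2$. The circle condition therefore becomes $y^{T}Gy=N$ with $y\in\Z^2$ and an integer $N\lesssim\rho^{10}$. The representation count is $\lesssim_\eps (N\det G)^{\eps}$. For instance, multiply by $a=G_{11}$ to get $aN=X^2+(\det G)Y^2$, and count elements of norm $aN$ in the ring of integers of $\mathbb{Q}(\sqrt{-\det G})$; there are at most $6\tau(aN)$ of them. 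This closes the base case with an $R^\eps$-type loss.

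In particular, the case ``$N$ huge compared with $\rho$'' never arises, so you should delete the detour through Jarn\'ik and Bombieri--Pila. As stated it is also wrong. Bombieri--Pila gives only $\rho^{1/2+\eps}$ for a circle, not $\rho^{\eps}$, and Jarn\'ik gives only $\rho^{2/3}$. The $\rho^{\eps}$ bound on circles is purely arithmetic and cannot come from convexity alone.

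As for what each approach buys: the citation buys the paper brevity. Your argument shows why the bound is uniform in center, span and orientation. The only arithmetic input is the divisor bound for positive binary forms, and three lattice points on the circle force heights that are polynomial in $\rho$.
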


For $\rho>1$, \eqref{eq:embedded-sphere-actual} is Lemma~4 of
Huang--Zhang \cite{HuangZhang}, after relabeling the dimension
of the embedded sphere.  The statement is uniform in the sphere, so its
constant is independent of the center, affine span, orientation, and
embedding; in particular, no determinant, covolume, or rationality factor is
present.  For $0\leq\rho\leq1$, lattice separation gives $O_d(1)$ points.
Notice in particular that an embedded circle of radius at most $R$ contains
$O_{d,\eps}(R^\eps)$ lattice points.

\section{The circle-method decomposition}

The analytic estimates in this section and Sections~4--5 are used for every
$d\geq4$.  In dimension four, they are applied only when
$R^2\in\mathbb N\setminus4\mathbb N$.

\noindent\textbf{The low-frequency mechanism.}
In the variable-radius sparse-maximal setting,  Kesler--Lacey--Mena group moduli in a dyadic block, keep the resulting
Ramanujan-sum factor explicit, separate its zero argument from the
nonzero arguments, and absorb the exact-sphere term; see
\cite{KLM}.  We implement the same
mechanism at one fixed radius while summing all moduli $q\leq N$ together in this and the next two sections, which leads to the
precise $N$-dependence required in Theorem~\ref{thm:analytic-density}.

We follow Magyar--Stein--Wainger \cite{MSW}, absorbing the dimensional
constant into the surface measure.  Let
\[
 c_d:=\frac{\pi^{d/2}}{\Gamma(d/2)},
\]
and let $d\sigma_R$ be $c_d$ times probability surface measure on
$\{x\in\R^d:|x|=R\}$. 
Thus
\[
 d\sigma_R
 =R^{2-d}\delta(|x|^2-R^2)\,dx,\qquad
 d\sigma_R(\R^d)=c_d,\qquad
 \widehat{d\sigma_R}(\xi)=\widehat{d\sigma_1}(R\xi).
\]

Let $\Psi_H$ be the fixed cutoff in the approximation formula
\cite{HughesSparse}.  Hughes writes the lattice Fourier series with
character $\e(x\cdot\xi)$, whereas our multiplier convention uses
$\e(-x\cdot\xi)$, so we set
\[
 \Phi(\theta):=\Psi_H(-\theta),\qquad
 \Phi_t(\theta):=\Phi(t\theta),\qquad t\geq1.
\]
This fixed cutoff belongs to $C_c^\infty((-1/4,1/4)^d)$ and equals one
on $(-1/8,1/8)^d$.  All implicit constants below may depend on $\Phi$.
For $q\in\mathbb N$, write $\Z_q:=\Z/q\Z$ and let $\Z_q^\times$ denote its
group of units; for $q=1$ we use the convention $\Z_1^\times=\{0\}$.  For
$\ell\in\Z^d$, define
\begin{equation}\label{eq:K-def}
 K(R,q,\ell)
 :=q^{-d}\sum_{a\in\Z_q^\times}\e_q(-aR^2)
 \sum_{u\in\Z_q^d}\e_q\bigl(a|u|^2+u\cdot\ell\bigr).
\end{equation}
The quantity $K(R,q,\ell)$ is $q\Z^d$-periodic in $\ell$.  For $q\leq R$,
define the full main-term multiplier and its convolution operator by
\[
 c_{R,q}(\xi):=\sum_{\ell\in\Z^d}K(R,q,\ell)
 \Phi_q(\xi-\ell/q)\widehat{d\sigma_R}(\xi-\ell/q),
 \qquad C_{R,q}f:=\mathcal F^{-1}(c_{R,q}\widehat f).
\]
The sum over $\ell\in\Z^d$ makes $c_{R,q}$ a $\Z^d$-periodic multiplier
on $\T^d$.

With the reflected cutoff above, the decomposition in
\cite[(10)--(13)]{HughesSparse}, evaluated at $-\xi$, gives the exact identity
\[
\cA_R=\sum_{1\leq q\leq R}C_{R,q}+\mathcal E_R,
\]
and \cite[(11)]{HughesSparse} gives
\begin{equation}\label{eq:ER-L2}
\|\mathcal E_R\|_{\ell^2\to\ell^2}
\lesssim_{d,\eps}R^{(3-d)/2+\eps}.
\end{equation}

Fix an integer $N$ with $1\leq N\leq R$.  For $1\leq q\leq N$, let
\begin{equation}\label{eq:c1-def}
 c^1_{R,q}(\xi)
 :=\sum_{\ell\in\Z^d}K(R,q,\ell)
 \Phi_{Rq/N}(\xi-\ell/q)
 \widehat{d\sigma_R}(\xi-\ell/q),
\end{equation}
where the locally finite sum is a $\Z^d$-periodic multiplier.  Since
$Rq/N\geq q$, the cutoff $\Phi_{Rq/N}$ is supported in a smaller
neighborhood of each rational point $\ell/q$ than $\Phi_q$.  Indeed, if $\theta\in\operatorname{supp}\Phi_{Rq/N}$, then
\[
 \|q\theta\|_\infty<\frac{N}{4R}.
\]
Thus, whenever $N\leq R/4$, this support lies in the region
$\|q\theta\|_\infty<1/8$ on which $\Phi_q=1$.
Thus $C^1_{R,q}$ is the narrow low-frequency portion of $C_{R,q}$.
Let
$C^1_{R,q}$ be the associated convolution operator and put
\begin{equation}\label{eq:LN-def}
 L_N:=\sum_{q\leq N}C^1_{R,q}.
\end{equation}
Finally, define
\[
 C^2_{R,q}:=C_{R,q}-C^1_{R,q},\qquad
 H_N:=\mathcal E_R+\sum_{N<q\leq R}C_{R,q}+\sum_{q\leq N}C^2_{R,q}.
\]
By definition,
\begin{equation}\label{eq:decomposition}
 \cA_R=L_N+H_N.
\end{equation}

\subsection{The high-frequency estimate}

Write $q=q_{\rm odd}q_{\rm even}$, where $q_{\rm even}$ is the full power of
two dividing $q$, and set
\begin{equation}\label{eq:rho-def}
 \rho_R(q):=\bigl((q_{\rm odd},R^2)q_{\rm even}\bigr)^{1/2}.
\end{equation}
Together with the residual estimate \eqref{eq:ER-L2}, we use the
explicit odd/even Weil estimate
\begin{equation}\label{eq:Weil-correct}
 \sup_{\ell}|K(R,q,\ell)|
 \lesssim_{d,\eps}q^{-(d-1)/2+\eps}\rho_R(q),
\end{equation}
as in \cite[(18)]{HughesSparse}.
We also use the standard stationary-phase bound
\begin{equation}\label{eq:sigma-stationary}
 |\widehat{d\sigma_R}(\xi)|
 \lesssim_d(1+R|\xi|)^{-(d-1)/2}.
\end{equation}

The following estimates follow from Proposition~2.6 of Kesler--Lacey \cite{KeslerLacey}, after rewriting the
notation.

\begin{lemma}[Divisor sums for the Weil factor]\label{lem:rho-sums}
Let $1\leq N<R$, $a>1$, and $\eps>0$.  Then
\begin{align}
 \sum_{q>N}q^{-a}\rho_R(q)
 &\lesssim_a N^{1-a}\sigma_{-1/2}(R^2),
 \label{eq:rho-large-sum}\\
 \sum_{q\leq N}q^\eps\rho_R(q)
 &\lesssim_\eps N^{1+\eps}\sigma_{-1/2}(R^2),
 \label{eq:rho-small-sum}
\end{align}
where
\[
 \sigma_{-1/2}(m):=\sum_{r\mid m}r^{-1/2}.
\]
\end{lemma}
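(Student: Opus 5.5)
The plan is to prove both bounds directly from the multiplicative structure of $\rho_R$, rather than tracking the notational translation from \cite[Proposition~2.6]{KeslerLacey}. Write $q=2^j q'$ with $q'$ odd, so that $\rho_R(q)=2^{j/2}(q',R^2)^{1/2}$. Group the moduli according to the divisor $r:=(q',R^2)$, which is an odd divisor of $R^2$. If $(q',R^2)=r$, then $r\mid q$, so we may write $q=rm$ with $m\geq1$. Dropping the coprimality condition on the cofactor only enlarges a sum of nonnegative terms. This gives the majorization
\[
 \rho_R(q)\leq \sum_{\substack{r\mid R^2\\ r\,\mathrm{odd},\ r\mid q}} r^{1/2}\,2^{v_2(q)/2}.
\]
Summing this over $q$ turns each bound into a sum over $r\mid R^2$ of $r^{1/2}$ times a one-variable sum over multiples of $r$. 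That one-variable sum still carries the dyadic weight $2^{v_2(q)/2}$.

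For \eqref{eq:rho-large-sum}, substitute $q=rm$. Since $r$ is odd, $v_2(q)=v_2(m)$, and the term for $r$ becomes
\[
 r^{1/2}\,r^{-a}\sum_{m>N/r} m^{-a}2^{v_2(m)/2}.
\]
To control the inner sum, write $m=2^i m'$ with $m'$ odd. The factor $2^{i/2}2^{-ia}$ is summable over $i$ because $a>1>1/2$. For each fixed $i$, the sum over $m'>N/(r2^i)$ of $m'^{-a}$ is bounded by $\min\bigl(1,(N/(r2^i))^{1-a}\bigr)$ up to $O_a(1)$. Combining these gives an inner sum of size $\lesssim_a \min\bigl(1,(N/r)^{1-a}\bigr)$.

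The $r$-term is then $r^{1/2-a}\min\bigl(1,(N/r)^{1-a}\bigr)$, and we split on the size of $r$. When $r\le N$, the $r$-term equals $r^{1/2-a}(N/r)^{1-a}=N^{1-a}r^{-1/2}$, which is the desired form directly. When $r>N$, the $r$-term equals $r^{1/2-a}$. Because $a>1$ we have $r^{1-a}\le N^{1-a}$, so $r^{1/2-a}=r^{-1/2}r^{1-a}\le N^{1-a}r^{-1/2}$. In both cases the term is at most $N^{1-a}r^{-1/2}$, and summing over $r\mid R^2$ gives $N^{1-a}\sigma_{-1/2}(R^2)$.

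For \eqref{eq:rho-small-sum}, first bound $q^\eps\le N^\eps$. The term for $r$ is then at most $N^\eps r^{1/2}\sum_{m\le N/r}2^{v_2(m)/2}$. To estimate the inner sum, note that the number of $m\le M$ with $v_2(m)=i$ is at most $M2^{-i}$. Hence
\[
 \sum_{m\le M}2^{v_2(m)/2}\le \sum_{i} 2^{i/2}M2^{-i}\lesssim M.
\]
Taking $M=N/r$, the $r$-term is $\lesssim N^\eps r^{1/2}(N/r)=N^{1+\eps}r^{-1/2}$. Divisors $r>N$ contribute nothing, since then there is no $m\geq1$ with $m\le N/r$. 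Summing over $r\mid R^2$ again gives $N^{1+\eps}\sigma_{-1/2}(R^2)$.

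The computations themselves are routine. The step that needs care is the majorization of $\rho_R(q)$: the divisor $(q',R^2)$ has to be attached as a factor dividing $q$, and the $2$-part has to be kept separate. If one instead bounds $(q_{\rm odd},R^2)^{1/2}$ crudely by $\tau(R^2)$-type quantities, the gain $r^{-1/2}$ that produces $\sigma_{-1/2}$ is lost. The hypothesis $a>1$ is used twice: for summability of the tail sums, and to handle the large divisors $r>N$ in \eqref{eq:rho-large-sum}.
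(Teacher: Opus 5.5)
Your proof is correct. It also goes further than the paper, which gives no argument for this lemma and only states that both bounds follow from Proposition~2.6 of Kesler--Lacey after rewriting the notation. Your direct route has three steps: the majorization $\rho_R(q)\le\sum_{r\mid R^2,\ r\ \mathrm{odd},\ r\mid q}r^{1/2}2^{v_2(q)/2}$, the substitution $q=rm$ (which leaves $v_2$ unchanged because $r$ is odd), and absorbing the $2$-adic weight using the density $2^{-i}$ of multiples of $2^i$. Writing this out makes two things visible that the citation hides. First, the even part $q_{\rm even}$, which enters $\rho_R$ with no gcd against $R^2$, costs nothing. Second, the hypothesis $N<R$ is never used, the constant in the tail bound depends only on $a$, and the constant in the second bound is absolute.

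One step should be written out. When you combine the dyadic pieces in the tail sum, the second branch of the minimum is $(N/(r2^i))^{1-a}=(N/r)^{1-a}2^{i(a-1)}$, which grows in $i$. The summability of $2^{i(1/2-a)}$ does not handle this branch. What makes it sum to $O\bigl((N/r)^{1-a}\bigr)$ is the product $2^{i(1/2-a)}\cdot2^{i(a-1)}=2^{-i/2}$. Add that line and use $\sum_i\min(x_i,y_i)\le\min\bigl(\sum_i x_i,\sum_i y_i\bigr)$. Then the bound $\lesssim_a\min\{1,(N/r)^{1-a}\}$ for the inner sum follows, and the rest of your argument goes through as written: the split $r\le N$ versus $r>N$, and the count of $m\le M$ with $v_2(m)=i$ in the second estimate.
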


The proof below follows the decomposition of $M_{2,2}$ and $M_{2,3}$ in
\cite[Section~3]{KeslerLacey}.  We include the details, since the precise
powers of $q$ are needed in dimension four.

\begin{proposition}[High-frequency $\ell^2$ estimate]\label{prop:HN-L2}
There is $c=c(d,\Phi)>0$, with $c\leq1/4$, such that, for
$1\leq N\leq cR$ and every $\eps>0$,
\begin{equation}\label{eq:HN-L2}
 \|H_N\|_{\ell^2\to\ell^2}
 \lesssim_{d,\eps}R^\eps N^{-(d-3)/2}.
\end{equation}
\end{proposition}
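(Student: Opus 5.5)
We bound the three pieces of $H_N$ separately, using the triangle inequality:
\[
 \|H_N\|_{\ell^2\to\ell^2}\leq\|\mathcal E_R\|+\sum_{N<q\leq R}\|C_{R,q}\|+\sum_{q\leq N}\|C^2_{R,q}\|.
\]
The residual is handled directly by \eqref{eq:ER-L2}. It gives $R^{(3-d)/2+\eps}\leq R^\eps N^{-(d-3)/2}$, since $N\leq R$.

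\textbf{Large moduli.} For fixed $q$, the cutoffs $\Phi_q(\cdot-\ell/q)$ for distinct $\ell\in\Z^d/q\Z^d$ have disjoint supports in $\T^d$, since their supports lie in $\|q\theta\|_\infty<1/4$. Hence
\[
 \|C_{R,q}\|_{\ell^2\to\ell^2}=\|c_{R,q}\|_\infty\lesssim\sup_\ell|K(R,q,\ell)|.
\]
The Weil estimate \eqref{eq:Weil-correct} then gives $\|C_{R,q}\|\lesssim q^{-(d-1)/2+\eps}\rho_R(q)$. Summing over $q>N$ with \eqref{eq:rho-large-sum}, taking $a=(d-1)/2-\eps>1$ (valid for $d\geq4$), yields $N^{-(d-3)/2+\eps}\sigma_{-1/2}(R^2)$. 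The divisor bound $\sigma_{-1/2}(R^2)\leq\tau(R^2)\lesssim_\eps R^\eps$ absorbs the arithmetic factor.

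\textbf{Small moduli.} This is where the gain must come from decay of $\widehat{d\sigma_R}$ rather than from $q$. For $q\leq N\leq cR$ with $c\leq 1/4$, the remark after \eqref{eq:c1-def} shows that $\Phi_q=1$ on the support of $\Phi_{Rq/N}$. Therefore
\[
 c^2_{R,q}(\xi)=\sum_\ell K(R,q,\ell)\bigl(\Phi_q-\Phi_{Rq/N}\bigr)(\xi-\ell/q)\,\widehat{d\sigma_R}(\xi-\ell/q),
\]
and these terms again have disjoint supports in $\ell$. On the support of $\Phi_q-\Phi_{Rq/N}$ we have $\|q\theta\|_\infty\geq N/(8R)$, so $|\theta|\gtrsim N/(Rq)$. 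Stationary phase \eqref{eq:sigma-stationary} then gives $|\widehat{d\sigma_R}(\theta)|\lesssim (N/q)^{-(d-1)/2}$. Combining with the Weil estimate,
\[
 \|C^2_{R,q}\|\lesssim q^{-(d-1)/2+\eps}\rho_R(q)\,(q/N)^{(d-1)/2}=N^{-(d-1)/2}q^\eps\rho_R(q).
\]
Summing over $q\leq N$ via \eqref{eq:rho-small-sum} gives $N^{-(d-1)/2}N^{1+\eps}\sigma_{-1/2}(R^2)\lesssim R^{2\eps}N^{-(d-3)/2}$.

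The step requiring the most care is the small-moduli piece. One must check that the support separation actually holds, which is where the constant $c$ and the requirement that $\Phi_q\equiv1$ on $(-1/8,1/8)^d$ enter. One must also keep the exact power of $q$: the Weil power $q^{-(d-1)/2}$ is cancelled exactly by the stationary-phase gain $(q/N)^{(d-1)/2}$, leaving only $q^\eps\rho_R(q)$, whose sum is linear in $N$. This cancellation is what makes the argument work in dimension four, where there is no slack. If uniformity in $\ell$ of the disjointness needs $c$ smaller, we shrink $c$ accordingly. Rescaling $\eps$ completes \eqref{eq:HN-L2}.
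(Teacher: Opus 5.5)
Your proposal is correct and follows the paper's proof essentially step for step. You treat the residual via \eqref{eq:ER-L2}; the moduli $q>N$ via disjoint rational supports, Plancherel, the Weil bound and \eqref{eq:rho-large-sum}; and the moduli $q\leq N$ via the separation $\|q\theta\|_\infty\geq N/(8R)$ on the support of $\Phi_q-\Phi_{Rq/N}$, stationary phase and \eqref{eq:rho-small-sum}, with $\sigma_{-1/2}(R^2)\lesssim_\eps R^\eps$ absorbing the arithmetic factor. The only bookkeeping difference is that the paper reduces to $\eps\leq1$ and runs the Weil bound with $\eps/4$, so that $a=(d-1)/2-\eps/4>1$ holds automatically; your choice $a=(d-1)/2-\eps$ needs $\eps<1/2$, which is harmless since $\eps$ may be taken small.
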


\begin{proof}
It suffices to consider $0<\eps\leq1$.  For $q>N$, the disjointness of the
fixed-$q$ rational supports, Plancherel's theorem, and
\eqref{eq:Weil-correct} give
\[
 \|C_{R,q}\|_{2\to2}
 \lesssim_{d,\eps}
 q^{-(d-1)/2+\eps/4}\rho_R(q).
\]
Lemma~\ref{lem:rho-sums}, applied with
$(d-1)/2-\eps/4>1$, therefore yields
\[
 \left\|\sum_{N<q\leq R}C_{R,q}\right\|_{2\to2}
 \lesssim_{d,\eps}
 N^{-(d-3)/2+\eps/4}\sigma_{-1/2}(R^2).
\]

For $q\leq N$, the multiplier of $C^2_{R,q}$ is supported where
$R|\xi-\ell/q|_\infty\gtrsim N/q$.  Hence
\eqref{eq:Weil-correct}, \eqref{eq:sigma-stationary}, and Plancherel's
theorem give
\[
 \|C^2_{R,q}\|_{2\to2}
 \lesssim_{d,\eps}
 N^{-(d-1)/2}q^{\eps/4}\rho_R(q).
\]
A second application of Lemma~\ref{lem:rho-sums} gives
\[
 \left\|\sum_{q\leq N}C^2_{R,q}\right\|_{2\to2}
 \lesssim_{d,\eps}
 N^{-(d-3)/2+\eps/4}\sigma_{-1/2}(R^2).
\]
Combining these estimates with \eqref{eq:ER-L2}, and using
\[
 \sigma_{-1/2}(R^2)\lesssim_\eps R^{\eps/4},
 \qquad N^{\eps/4}\leq R^{\eps/4},
\]
proves \eqref{eq:HN-L2}.
\end{proof}

\subsection{The exact Ramanujan kernel}

The representation below is classical.  Hughes writes the resulting
Ramanujan kernel explicitly in \cite[(39)]{HughesSparse}.
Kesler--Lacey \cite[Proposition~2.11]{KeslerLacey} compute, for each fixed
reduced numerator $a\in\Z_q^\times$, the physical-space phase
$\e_q(a(|n|^2-R^2))$; summing those phases over $a$ is exactly the Ramanujan
sum $c_q(|n|^2-R^2)$.  We include the calculation because the surface
normalization and torus periodization are essential for an exact identity.

For $m\in\Z$, define the Ramanujan sum
\[
 c_q(m):=\sum_{a\in\Z_q^\times}\e_q(am).
\]

\begin{lemma}[Exact low-frequency kernel]\label{lem:ramanujan-kernel}
For every $n\in\Z^d$,
\begin{equation}\label{eq:exact-kernel}
 \check C^1_{R,q}(n)
 =c_q(|n|^2-R^2)
 \bigl(\check\Phi_{Rq/N}*d\sigma_R\bigr)(n).
\end{equation}
\end{lemma}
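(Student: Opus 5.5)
The plan is to compute $\check C^1_{R,q}(n)=\int_{\T^d}c^1_{R,q}(\xi)\e(n\cdot\xi)\,d\xi$ directly from the definition \eqref{eq:c1-def}, unfolding the periodized sum over $\ell$ into an integral over $\R^d$. Since $c^1_{R,q}$ is $\Z^d$-periodic, I write $\ell=\ell_0+qm$ with $\ell_0\in\Z_q^d$ and $m\in\Z^d$, and use the $q\Z^d$-periodicity of $K(R,q,\cdot)$ in $\ell$. Then
\[
 \int_{\T^d}\sum_{\ell\in\Z^d}K(R,q,\ell)\,g(\xi-\ell/q)\e(n\cdot\xi)\,d\xi
 =\sum_{\ell_0\in\Z_q^d}K(R,q,\ell_0)\int_{\R^d}g(\xi-\ell_0/q)\e(n\cdot\xi)\,d\xi ,
\]
where $g:=\Phi_{Rq/N}\widehat{d\sigma_R}$. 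This uses the identity $\e(n\cdot m)=1$ for $n\in\Z^d$, which lets me replace $\xi\mapsto\xi+m$ and tile $\R^d$ by translates of $\T^d$. Absolute convergence holds because $\Phi_{Rq/N}$ is compactly supported, so only finitely many $\ell$ meet each $\T^d$.

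Next I shift $\xi\mapsto\xi+\ell_0/q$. This produces the factor $\e_q(n\cdot\ell_0)$ times $\int_{\R^d}g(\xi)\e(n\cdot\xi)\,d\xi$. That last integral is the Euclidean inverse transform of a product, namely $(\check\Phi_{Rq/N}*d\sigma_R)(n)$. The Fourier convention needs care here: the Euclidean inverse transform of $\widehat{d\sigma_R}$ is $d\sigma_R$, and $d\sigma_R$ is even, so either sign convention gives the same convolution. The kernel factor that remains is $\sum_{\ell_0\in\Z_q^d}K(R,q,\ell_0)\e_q(n\cdot\ell_0)$.

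I then evaluate this sum by inserting \eqref{eq:K-def} and applying orthogonality in $\ell_0$:
\[
 q^{-d}\sum_{a\in\Z_q^\times}\e_q(-aR^2)\sum_{u\in\Z_q^d}\e_q(a|u|^2)\sum_{\ell_0\in\Z_q^d}\e_q\bigl(\ell_0\cdot(u+n)\bigr).
\]
The inner sum equals $q^d\one_{u\equiv -n \pmod q}$, which forces $u=-n$. What survives is $\sum_{a}\e_q(a(|n|^2-R^2))=c_q(|n|^2-R^2)$, and this gives \eqref{eq:exact-kernel}. The case $q=1$ is consistent with the convention $\Z_1^\times=\{0\}$, since then $c_1\equiv1$.

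The main obstacle is bookkeeping rather than substance. Three points need care: the sign conventions, namely the reflected cutoff $\Phi(\theta)=\Psi_H(-\theta)$ and the sign $\e(-x\cdot\xi)$ in the transform; the justification for exchanging the periodization with the integral; and checking that $\check\Phi_{Rq/N}$ is interpreted as the Euclidean inverse transform on $\R^d$, evaluated at lattice points. I would also verify the identity using only Fubini and compact support of $\Phi_{Rq/N}\widehat{d\sigma_R}$. It does not need the support condition $N\leq R/4$.
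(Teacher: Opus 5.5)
Your proposal is correct and takes the same route as the paper. You unfold the periodization via $\ell=\ell_0+qv$, which produces $(\check\Phi_{Rq/N}*d\sigma_R)(n)\sum_{\ell_0\in\Z_q^d}K(R,q,\ell_0)\e_q(n\cdot\ell_0)$, and you then evaluate that finite sum by orthogonality in $\ell_0$ to get $c_q(|n|^2-R^2)$. Your added bookkeeping (absolute convergence, the shift by $\ell_0/q$, and evenness of $d\sigma_R$) simply makes explicit what the paper compresses into ``tiling $\R^d$ by fundamental domains and changing variables.''
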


\begin{proof}
Write $\ell=\ell_0+qv$ with $\ell_0\in\Z_q^d$ and $v\in\Z^d$.  As explained
above, the sum in \eqref{eq:c1-def} periodizes the Euclidean multiplier
$\Phi_{Rq/N}\widehat{d\sigma_R}$.  Tiling $\R^d$ by fundamental domains and
changing variables therefore gives
\[
 \check C^1_{R,q}(n)
 =\bigl(\check\Phi_{Rq/N}*d\sigma_R\bigr)(n)
 \sum_{\ell\in\Z_q^d}K(R,q,\ell)\e_q(n\cdot\ell).
\]
Using \eqref{eq:K-def} and finite Fourier inversion,
\begin{align*}
 \sum_{\ell\in\Z_q^d}K(R,q,\ell)\e_q(n\cdot\ell)
 &=q^{-d}\sum_{a\in\Z_q^\times}\e_q(-aR^2)
   \sum_{u\in\Z_q^d}\e_q(a|u|^2)
   \sum_{\ell\in\Z_q^d}\e_q((u+n)\cdot\ell)\\
 &=\sum_{a\in\Z_q^\times}\e_q\bigl(a(|n|^2-R^2)\bigr)\\
 &=c_q(|n|^2-R^2).
\end{align*}
Indeed, the innermost sum equals $q^d$ precisely when
$u\equiv-n\pmod q$ and is zero otherwise.  This proves
\eqref{eq:exact-kernel}.
\end{proof}

\subsection{The Euclidean cap estimate}

\begin{lemma}\label{lem:kernel-localization}
For every $A>0$ and every $n\in\Z^d$,
\begin{equation}\label{eq:kernel-localization}
 \left|\bigl(\check\Phi_{Rq/N}*d\sigma_R\bigr)(n)\right|
 \lesssim_{A,d}
 \frac{N}{qR^d}
 \left(1+\frac{N\,\bigl||n|-R\bigr|}{Rq}\right)^{-A}.
\end{equation}
The estimate is uniform for $1\leq q\leq N\leq cR$.
\end{lemma}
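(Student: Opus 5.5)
The plan is to rescale and reduce the bound to a one-dimensional radial estimate. Write $t:=Rq/N\geq1$, so $\check\Phi_t(x)=t^{-d}\check\Phi(x/t)$. Here $\check\Phi$ denotes the Euclidean inverse Fourier transform of the smooth compactly supported $\Phi$, so it is a Schwartz function: $|\check\Phi(y)|\lesssim_{A}(1+|y|)^{-M}$ for every $M$. Then
\[
 \bigl(\check\Phi_t*d\sigma_R\bigr)(x)
 =t^{-d}\int_{|y|=R}\check\Phi\bigl((x-y)/t\bigr)\,R^{2-d}\,\frac{dS(y)}{2R}\cdot(\text{const}),
\]
where we use $d\sigma_R=R^{2-d}\delta(|y|^2-R^2)\,dy$, which is a constant times $R^{1-d}\,dS(y)$ on the sphere. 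The estimate is pointwise in $x\in\R^d$, so the lattice plays no role.

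First, the magnitude bound is $N/(qR^d)=1/(tR^{d-1})$. For the main case $\bigl||x|-R\bigr|\lesssim t$, the rapid decay of $\check\Phi$ restricts the integral effectively to the cap $\{y:|y|=R,\ |x-y|\lesssim t\}$, up to Schwartz tails. This cap has surface area $\lesssim\min(t,R)^{d-1}$. Its contribution is therefore bounded by
\[
 t^{-d}R^{1-d}\min(t,R)^{d-1}\leq t^{-d}R^{1-d}t^{d-1}=\frac{1}{tR^{d-1}},
\]
which is the claimed bound. To make this rigorous, decompose the sphere dyadically according to $|x-y|\sim 2^jt$. The piece at scale $2^jt$ has area $\lesssim(2^jt)^{d-1}$, and on it the kernel is $\lesssim 2^{-jM}$. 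Summing over $j$ gives $\lesssim t^{-1}R^{1-d}\sum_j2^{j(d-1-M)}$, which converges once $M>d-1$. We also use the trivial area bound $R^{d-1}$ when $2^jt>R$.

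Second, we obtain the decay factor. Every $y$ on the sphere satisfies $|x-y|\geq\bigl||x|-R\bigr|=:D$. Hence the dyadic pieces with $2^jt<D$ are empty, and the sum starts at $2^{j}t\gtrsim D$. This produces the extra factor $(D/t)^{-(M-d+1)}\lesssim(1+D/t)^{-A}$ once $M$ is chosen large depending on $A$. When $D\leq t$, the factor $(1+D/t)^{-A}$ is already comparable to $1$, so the first step suffices. Since $D/t=N\bigl||n|-R\bigr|/(Rq)$, this gives exactly \eqref{eq:kernel-localization}.

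The only delicate point is uniformity. The constants come from the Schwartz seminorms of the fixed $\Phi$ and from the area bounds for caps, $|\{y\in S_R:|x-y|\leq r\}|\lesssim_d\min(r,R)^{d-1}$. This cap bound holds uniformly in $x$, since a ball of radius $r$ meets a sphere in a set of area $\lesssim r^{d-1}$. None of these constants involves $q$, $N$ or $R$ beyond the quantity $t=Rq/N\geq1$, which uses $q\geq1$ and $N\leq R$. So the estimate is uniform over $1\leq q\leq N\leq cR$. I expect this step to be routine, the main care being the correct surface-measure normalization $R^{2-d}\delta(|y|^2-R^2)\sim R^{1-d}dS$.
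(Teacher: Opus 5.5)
Your proposal is correct and takes essentially the paper's route. Both arguments rescale with $t=Rq/N$, use the Schwartz decay of $\check\Phi$, decompose into dyadic annuli $|x-y|\sim 2^jt$ with the cap bound $\min(2^jt,R)^{d-1}$, and use $|x-y|\geq\bigl||x|-R\bigr|$ to gain the factor $(1+D/t)^{-A}$. The only cosmetic difference is that the paper extracts $(1+D/t)^{-A}$ from the kernel decay before decomposing and uses the total mass for $2^jt>R$, whereas you observe that annuli of radius below $D$ are empty.
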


\begin{proof}
The scaling identity
\[
 \check\Phi_{Rq/N}(x)
 =\left(\frac{Rq}{N}\right)^{-d}
 \check\Phi\left(\frac{Nx}{Rq}\right)
\]
and the rapid decay of $\check\Phi$ give, for any $B>0$,
\begin{align*}
 \left|\bigl(\check\Phi_{Rq/N}*d\sigma_R\bigr)(n)\right|
 &\lesssim_B
 \left(\frac{Rq}{N}\right)^{-d}
 \int_{S_R}
 \left(1+\frac{N|n-y|}{Rq}\right)^{-B}d\sigma_R(y).
\end{align*}
Put $t:=Rq/N$ and $D:=\dist(n,S_R)$.  Uniformly in
$z\in\R^d$ and $u>0$,
\[
 d\sigma_R(S_R\cap B(z,u))
 \lesssim_d\min\{1,(u/R)^{d-1}\}.
\]
Choose $B>A+d-1$, extract the factor $(1+D/t)^{-A}$, and decompose the
remaining integral into annuli of radii $2^jt$.  For $2^jt\leq R$ use the
cap estimate above.  For $2^jt>R$ use the fixed total mass of $d\sigma_R$;
the resulting tail is bounded by
\[
 (t/R)^{B-A}\leq(t/R)^{d-1}.
\]
Consequently,
\[
 \left|\bigl(\check\Phi_t*d\sigma_R\bigr)(n)\right|
 \lesssim_{A,d}
 t^{-1}R^{-(d-1)}(1+D/t)^{-A}.
\]
Since $\dist(n,S_R)=\bigl||n|-R\bigr|$ and
$[(Rq/N)R^{d-1}]^{-1}=N/(qR^d)$, this is
\eqref{eq:kernel-localization}.
\end{proof}

\subsection{Summing the Ramanujan factors}

\begin{lemma}[Ramanujan summation]\label{lem:ramanujan-sum}
Let $m\in\Z\setminus\{0\}$.  Then
\begin{equation}\label{eq:ramanujan-sum-bound}
 \sum_{q\leq N}\frac{|c_q(m)|}{q}
 \lesssim \tau(|m|)\log(2N).
\end{equation}
Consequently, if $0<|m|\lesssim R^2$ and $N\leq R$, then for every
$\eps>0$,
\begin{equation}\label{eq:ramanujan-epsilon}
 \sum_{q\leq N}\frac{|c_q(m)|}{q}
 \lesssim_\eps R^\eps.
\end{equation}
For $m=0$,
\begin{equation}\label{eq:ramanujan-zero}
 \sum_{q\leq N}\frac{c_q(0)}q
 =\sum_{q\leq N}\frac{\phi(q)}q
 \lesssim N.
\end{equation}
\end{lemma}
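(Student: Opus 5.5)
The plan is to use the classical bound $|c_q(m)|\leq\sum_{r\mid(q,m)}r$, which follows from the identity $c_q(m)=\sum_{r\mid(q,m)}r\,\mu(q/r)$ and $|\mu|\leq1$. Inserting this into the sum and interchanging the order of summation gives
\[
 \sum_{q\leq N}\frac{|c_q(m)|}{q}
 \leq\sum_{q\leq N}\frac1q\sum_{\substack{r\mid q\\ r\mid m}}r
 =\sum_{\substack{r\mid m\\ r\leq N}} r\sum_{\substack{q\leq N\\ r\mid q}}\frac1q .
\]
For a fixed divisor $r$ we write $q=rs$ with $s\leq N/r$. The inner sum is then $r^{-1}\sum_{s\leq N/r}s^{-1}\leq r^{-1}(1+\log N)\lesssim r^{-1}\log(2N)$. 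Each divisor $r$ of $m$ therefore contributes $O(\log(2N))$. Since $m\neq0$, there are at most $\tau(|m|)$ such divisors, and this proves \eqref{eq:ramanujan-sum-bound}.

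For \eqref{eq:ramanujan-epsilon} we invoke the standard divisor bound $\tau(k)\lesssim_\eps k^{\eps/2}$. Under the hypothesis $0<|m|\lesssim R^2$, this gives $\tau(|m|)\lesssim_\eps R^{\eps}$. We also have $\log(2N)\leq\log(2R)\lesssim_\eps R^{\eps}$, since $N\leq R$ and $R\geq1$ in the regime of interest. Combining these two facts and renaming $\eps$ yields the claim.

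For $m=0$, every $a\in\Z_q^\times$ contributes $\e_q(0)=1$, so $c_q(0)=|\Z_q^\times|=\phi(q)$. This includes $q=1$, where the convention $\Z_1^\times=\{0\}$ gives $c_1(0)=1=\phi(1)$. The trivial bound $\phi(q)/q\leq1$ then gives \eqref{eq:ramanujan-zero}. None of the steps is a real obstacle. The only point needing care is the divisor-sum identity for $c_q(m)$, which follows from Möbius inversion of $\sum_{d\mid q}c_d(m)=q\,\one_{q\mid m}$. That relation in turn comes from grouping the full sum $\sum_{a\in\Z_q}\e_q(am)$ according to $\gcd(a,q)$.
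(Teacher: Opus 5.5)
Your proposal is correct and follows essentially the same route as the paper. Both arguments bound $|c_q(m)|$ by $\sum_{r\mid(q,|m|)}r$ using the identity $c_q(m)=\sum_{r\mid(q,|m|)}r\,\mu(q/r)$, then interchange sums to get $\tau(|m|)\log(2N)$, absorb this into $R^\eps$ via the divisor bound and $\log(2N)\lesssim_\eps R^\eps$, and use $c_q(0)=\phi(q)\leq q$. Your derivation of the identity by M\"obius inversion of $\sum_{d\mid q}c_d(m)=q\,\one_{q\mid m}$, using the convention $\Z_1^\times=\{0\}$, is a correct detail the paper leaves implicit.
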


\begin{proof}
For $m<0$, all divisors below mean positive divisors of $|m|$.
The exact identity
\[
 c_q(m)=\sum_{r\mid(q,|m|)}r\,\mu(q/r)
\]
gives
\begin{align*}
 \sum_{q\leq N}\frac{|c_q(m)|}{q}
 &\leq\sum_{r\mid |m|}r
    \sum_{\substack{q\leq N\\r\mid q}}\frac1q\\
 &=\sum_{r\mid |m|}\sum_{k\leq N/r}\frac1k
 \lesssim \tau(|m|)\log(2N).
\end{align*}
Since $0<|m|\lesssim_dR^2$ and $N\leq R$, the divisor bound and
$\log(2N)\lesssim_\eps N^{\eps/4}$ give
\[
 \tau(|m|)\log(2N)
 \lesssim_{d,\eps}|m|^{\eps/4}N^{\eps/4}
 \lesssim_{d,\eps} R^\eps.
\]
This proves \eqref{eq:ramanujan-epsilon}.  Finally $c_q(0)=\phi(q)$, which
yields \eqref{eq:ramanujan-zero}.
\end{proof}

Combining Lemmas~\ref{lem:ramanujan-kernel}--\ref{lem:ramanujan-sum}
gives the following zero/nonzero dichotomy.  Put
$m=|n|^2-R^2$.  If $|n|\neq R$ and $|n|\lesssim R$, then
$0<|m|\lesssim R^2$; using the factor $N/(qR^d)$ from
\eqref{eq:kernel-localization} and summing $|c_q(m)|/q$ by
\eqref{eq:ramanujan-epsilon} gives
\begin{equation}\label{eq:off-sphere-kernel}
 \left|\sum_{q\leq N}\check C^1_{R,q}(n)\right|
 \lesssim_{d,\eps}R^\eps\frac{N}{R^d}.
\end{equation}
If $|n|=R$, then $m=0$ and $c_q(0)=\phi(q)$; hence
\eqref{eq:kernel-localization} and \eqref{eq:ramanujan-zero} instead give
\begin{equation}\label{eq:on-sphere-kernel}
 \sum_{q\leq N}|\check C^1_{R,q}(n)|
 \lesssim_d\frac{N^2}{R^d}.
\end{equation}
The larger $N^2/R^d$ contribution occurs only on $S_R$.
In the next proposition, this exact-sphere contribution is controlled by
a small multiple of the original positive spherical average and absorbed.
This is the fixed-radius form of the zero-parameter separation in \cite{KLM}.

\section{The refined low frequency estimate}

We work in the local setup introduced above: let $Q\subset\Z^d$ be a cube
of sidelength comparable to $R$, let $Q^*$ be a fixed enlargement with
$|Q|\asymp_d |Q^*|\asymp_d R^d$, and take
\[
E\subset Q^*,\qquad F\subset Q.
\]
In particular, every difference $x-y$ arising below satisfies
$|x-y|\lesssim_d R$.

\begin{proposition}[Refined low-frequency estimate]\label{prop:low-frequency}
Let $E\subset Q^*$ and $x\in Q$.  After handling the finitely many radii
$R<c^{-1}$ by
trivial bounds, assume $R\geq c^{-1}$.  For $1\leq N\leq cR$, where
$c=c(d,\Phi)>0$ is sufficiently small,
\begin{equation}\label{eq:low-frequency-refined}
 |L_N\one_E(x)|
 \lesssim_{d,\eps}
 R^\eps N\frac{|E|}{R^d}
 +C_d\frac{N^2}{R^2}\cA_R\one_E(x).
\end{equation}
Consequently,
\begin{equation}\label{eq:absorbed-pointwise}
 \cA_R\one_E(x)
 \lesssim_{d,\eps}
 R^\eps N\frac{|E|}{R^d}+|H_N\one_E(x)|.
\end{equation}
\end{proposition}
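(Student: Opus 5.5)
We write $L_N\one_E(x)=\sum_{y\in E}\sum_{q\leq N}\check C^1_{R,q}(x-y)$ and split the sum over $y\in E$ according to whether $n=x-y$ lies on $S_R$ or not. Since $x\in Q$ and $y\in Q^*$, every such $n$ satisfies $|n|\lesssim_d R$. Hence the off-sphere kernel bound \eqref{eq:off-sphere-kernel} applies to every $n$ with $|n|\neq R$. This bound comes from Lemma~\ref{lem:ramanujan-kernel}, the cap estimate of Lemma~\ref{lem:kernel-localization} with $A=0$ (giving the factor $N/(qR^d)$), and the Ramanujan summation \eqref{eq:ramanujan-epsilon} with $m=|n|^2-R^2$ satisfying $0<|m|\lesssim_d R^2$. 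Summing it over at most $|E|$ points $y$ yields the first term $R^\eps N|E|/R^d$ in \eqref{eq:low-frequency-refined}.

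For the on-sphere points $y\in E$ with $|x-y|=R$, we use \eqref{eq:on-sphere-kernel}. Each such $y$ contributes at most $C_dN^2/R^d$. The number of such $y$ is $\sum_{n\in S_R}\one_E(x-n)=R^{d-2}\cA_R\one_E(x)$ by \eqref{eq:calA-def}. Their total contribution is therefore at most
\[
C_d\frac{N^2}{R^d}\,R^{d-2}\cA_R\one_E(x)=C_d\frac{N^2}{R^2}\cA_R\one_E(x),
\]
which is the second term of \eqref{eq:low-frequency-refined}. The constant $C_d$ depends only on $d$ and $\Phi$, through Lemma~\ref{lem:kernel-localization} and \eqref{eq:ramanujan-zero}. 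This finishes the first claim. The triangle inequality over $y$ is harmless because we use absolute values throughout.

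For \eqref{eq:absorbed-pointwise}, we use the exact decomposition \eqref{eq:decomposition}:
\[
\cA_R\one_E(x)=L_N\one_E(x)+H_N\one_E(x)\leq|L_N\one_E(x)|+|H_N\one_E(x)|.
\]
Since $\cA_R\one_E(x)\geq0$, inserting \eqref{eq:low-frequency-refined} gives
\[
\cA_R\one_E(x)\leq C\,R^\eps N\frac{|E|}{R^d}+C_d\frac{N^2}{R^2}\cA_R\one_E(x)+|H_N\one_E(x)|.
\]
Because $N\leq cR$, we have $C_dN^2/R^2\leq C_dc^2$. Choosing $c=c(d,\Phi)$ small enough that $C_dc^2\leq1/2$, while also keeping $c\leq1/4$ and the constraint of Proposition~\ref{prop:HN-L2}, makes this coefficient at most $1/2$. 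The term $\cA_R\one_E(x)$ is finite, since it is bounded by $R^{2-d}|S_R|$, so we may subtract it to the left side and conclude.

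The main point needing care is the absorption step. The constant $C_d$ in \eqref{eq:on-sphere-kernel} must be independent of $N$, $R$ and $\eps$, so that shrinking $c$ is legitimate without circularity. We also need the off-sphere estimate to hold uniformly for $q\leq N\leq cR$. This is guaranteed by the uniformity statement in Lemma~\ref{lem:kernel-localization}. The restriction $R\geq c^{-1}$ only ensures that the range $1\leq N\leq cR$ is nonempty. Small radii are handled by the trivial bound $\cA_R\one_E\lesssim1$.
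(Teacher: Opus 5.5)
Your proposal is correct and follows the paper's proof essentially line by line. You split the kernel of $L_N$ into $|x-y|\neq R$ and $|x-y|=R$, apply \eqref{eq:off-sphere-kernel} and \eqref{eq:on-sphere-kernel}, identify the on-sphere count with $R^{d-2}\cA_R\one_E(x)$, and absorb via \eqref{eq:decomposition} after shrinking $c$ so that $C_dc^2\leq1/2$. The only cosmetic point is that Lemma~\ref{lem:kernel-localization} is stated for $A>0$, but any $A>0$ already implies the $A=0$ bound you use.
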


\begin{proof}
For $x\in Q$ and $y\in Q^*$, the difference $n=x-y$ satisfies $|n|\lesssim_dR$.
Split the convolution kernel of $L_N$ into $|n|\neq R$ and $|n|=R$.
By \eqref{eq:off-sphere-kernel},
\[
 \sum_{\substack{y\in E\\|x-y|\neq R}}
 \left|\sum_{q\leq N}\check C^1_{R,q}(x-y)\right|
 \lesssim_{d,\eps}R^\eps\frac{N}{R^d}|E|
 =R^\eps N\frac{|E|}{R^d}.
\]
By \eqref{eq:on-sphere-kernel},
\begin{align*}
 \sum_{\substack{y\in E\\|x-y|=R}}
 \left|\sum_{q\leq N}\check C^1_{R,q}(x-y)\right|
 &\lesssim_d\frac{N^2}{R^d}
   \sum_{|n|=R}\one_E(x-n)\\
 &=\frac{N^2}{R^2}\cA_R\one_E(x).
\end{align*}
This proves \eqref{eq:low-frequency-refined}.

Using the exact decomposition \eqref{eq:decomposition} and the positivity of
$\cA_R\one_E$,
\begin{align*}
 \cA_R\one_E
 &\leq |L_N\one_E|+|H_N\one_E|\\
 &\leq C_{d,\eps}R^\eps N\frac{|E|}{R^d}
   +C_d\frac{N^2}{R^2}\cA_R\one_E
   +|H_N\one_E|.
\end{align*}
Decrease the previously fixed $c$, if necessary, so that
$C_dc^2\leq1/2$.  This choice depends only on $d$ and $\Phi$, not on
$\eps$.  The last multiple of $\cA_R\one_E$ is then absorbed into the left
side, proving
\eqref{eq:absorbed-pointwise}.
\end{proof}

\begin{proposition}\label{prop:bourgain}
Let $E\subset Q^*$ and $F\subset Q$, and let $\delta$ be as in
\eqref{eq:local-density-def}.
Then, for $1\leq N\leq cR$,
\begin{equation}\label{eq:bourgain-refined}
 R^{-d}\ip{\cA_R\one_E}{\one_F}
 \lesssim_{d,\eps}
 R^\eps\left(
 N\delta+N^{-(d-3)/2}\delta^{1/2}
 \right).
\end{equation}
\end{proposition}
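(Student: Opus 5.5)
The plan is to pair the pointwise bound \eqref{eq:absorbed-pointwise} of Proposition~\ref{prop:low-frequency} with $\one_F$, and then treat the two resulting terms separately. Both $\cA_R\one_E$ and $\one_F$ are nonnegative, and \eqref{eq:absorbed-pointwise} holds at every $x\in Q\supset F$. Summing it over $x\in F$ therefore gives
\[
 \ip{\cA_R\one_E}{\one_F}
 \lesssim_{d,\eps}
 R^\eps N\frac{|E|}{R^d}|F|+\sum_{x\in F}|H_N\one_E(x)|.
\]
(For the finitely many radii $R<c^{-1}$, the estimate is trivial: the left side is at most $|S_R|R^{2-d}\min\{|E|,|F|\}\lesssim (|E||F|)^{1/2}$ and all other quantities are $O(1)$.) After multiplying by $R^{-d}$, the first term becomes exactly $R^\eps N\,|E||F|/R^{2d}=R^\eps N\delta$. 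This is the first summand in \eqref{eq:bourgain-refined}.

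For the second term, we apply Cauchy--Schwarz and then the high-frequency estimate, Proposition~\ref{prop:HN-L2}, which is valid in the same range $1\leq N\leq cR$ (shrinking $c$ so that both propositions apply):
\[
 \sum_{x\in F}|H_N\one_E(x)|
 \leq |F|^{1/2}\|H_N\one_E\|_{\ell^2}
 \lesssim_{d,\eps} R^\eps N^{-(d-3)/2}|E|^{1/2}|F|^{1/2}.
\]
Multiplying by $R^{-d}$ gives $R^\eps N^{-(d-3)/2}(|E||F|/R^{2d})^{1/2}=R^\eps N^{-(d-3)/2}\delta^{1/2}$, which is the second summand. Adding the two bounds, and renaming $\eps$ to absorb the two $R^\eps$ losses, proves \eqref{eq:bourgain-refined}.

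There is no real obstacle once Propositions~\ref{prop:low-frequency} and \ref{prop:HN-L2} are available. The only points to check are bookkeeping. First, the constant $c$ must be the smaller of the two constants from those propositions, and it must be independent of $\eps$. Second, the pointwise inequality is used only for $x\in F\subset Q$, where the localization $|x-y|\lesssim_d R$ (needed for the Ramanujan divisor bound) holds. Third, the step $\cA_R\one_E\leq|L_N\one_E|+|H_N\one_E|$ underlying the absorption needs no positivity of $H_N$, so the pairing argument is legitimate.
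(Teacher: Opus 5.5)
Your proposal is correct and follows the paper's proof essentially line for line: you pair \eqref{eq:absorbed-pointwise} with $\one_F$ to get the $R^\eps N\delta$ term, and you use Cauchy--Schwarz together with \eqref{eq:HN-L2} to get the $R^\eps N^{-(d-3)/2}\delta^{1/2}$ term. Your separate treatment of radii $R<c^{-1}$ is harmless but unnecessary, since for those radii the range $1\leq N\leq cR$ is empty.
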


\begin{proof}
Pair \eqref{eq:absorbed-pointwise} with $\one_F$ and divide by $R^d$.  The
low-frequency term is bounded by
\[
 R^\eps N\frac{|E||F|}{R^{2d}}
 =R^\eps N\delta.
\]
For the high-frequency term, Cauchy--Schwarz and \eqref{eq:HN-L2} give
\begin{align*}
 R^{-d}\ip{|H_N\one_E|}{\one_F}
 &\leq R^{-d}\|H_N\one_E\|_2|F|^{1/2}\\
 &\lesssim_{d,\eps}
 R^\eps N^{-(d-3)/2}
 \frac{|E|^{1/2}|F|^{1/2}}{R^d}\\
 &=R^\eps N^{-(d-3)/2}\delta^{1/2},
\end{align*}
as claimed.
\end{proof}

\section{The circle-method estimate above the density threshold}

Define
\begin{equation}\label{eq:kappa-def}
 \kappa_d:=\frac{(d-3)(d+2)}{d-2}.
\end{equation}

\begin{theorem}\label{thm:analytic-density}
Assume $d\geq4$, and in dimension four assume that
$R^2\in\mathbb N\setminus4\mathbb N$.  Under the local setup above, if
\begin{equation}\label{eq:analytic-density-hyp}
 \delta\geq R^{-\kappa_d},
\end{equation}
then the local endpoint estimate \eqref{eq:local-target-intro} holds.
\end{theorem}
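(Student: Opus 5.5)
The plan is to optimize the free parameter $N$ in Proposition~\ref{prop:bourgain}. The threshold \eqref{eq:analytic-density-hyp} is exactly the condition that the optimal $N$ satisfies the constraint $N\leq cR$. First, the trivial cases. Since $E\subset Q^*$ and $F\subset Q$, we have $|E|,|F|\lesssim_d R^d$, and hence $\delta\lesssim_d 1$. For the finitely many radii $R<c^{-1}$, the estimate \eqref{eq:local-target-intro} follows from the trivial bound $R^{-d}\ip{\cA_R\one_E}{\one_F}\lesssim_d R^{-d}|F|$, since then all quantities are $O_d(1)$ and $\delta\gtrsim_d R^{-2d}$ whenever $E,F\neq\emptyset$. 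So assume $R\geq c^{-1}$.

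Set
\[
 \beta:=\frac{d-2}{(d-3)(d+2)}.
\]
This exponent is chosen so that the high-frequency term in \eqref{eq:bourgain-refined} matches the target. Indeed,
\[
 N^{-(d-3)/2}\delta^{1/2}\leq\delta^{\frac d{d+2}}
 \iff
 N^{(d-3)/2}\geq\delta^{-\frac{d-2}{2(d+2)}}
 \iff
 N\geq\delta^{-\beta}.
\]
The low-frequency term requires $N\delta\leq\delta^{d/(d+2)}$, that is, $N\leq\delta^{-2/(d+2)}$. These two requirements are compatible because
\[
 \beta\leq\frac{2}{d+2}
 \iff
 d-2\leq2(d-3)
 \iff
 d\geq4.
\]
Since $\delta\lesssim 1$, any $N\asymp\delta^{-\beta}$ then satisfies $N\lesssim\delta^{-2/(d+2)}$ up to constants.

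Note that $1/\beta=\kappa_d$. Hence the hypothesis $\delta\geq R^{-\kappa_d}$ is equivalent to $\delta^{-\beta}\leq R$. Take $N:=\lfloor c\,\delta^{-\beta}\rfloor$. Then $N\leq cR$, which is the admissible range for Propositions~\ref{prop:HN-L2}, \ref{prop:low-frequency} and \ref{prop:bourgain}.

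If $c\delta^{-\beta}\geq 2$, then $N\geq \tfrac c2\delta^{-\beta}$ and $N\leq c\delta^{-\beta}$. Substituting into \eqref{eq:bourgain-refined} gives
\[
 R^{-d}\ip{\cA_R\one_E}{\one_F}
 \lesssim_{d,\eps}
 R^\eps\bigl(c\,\delta^{1-\beta}+c^{-(d-3)/2}\delta^{d/(d+2)}\bigr).
\]
Both terms are $\lesssim R^\eps\delta^{d/(d+2)}$. For the first term this uses $1-\beta\geq d/(d+2)$, which is the compatibility inequality above, together with $\delta\lesssim1$. If instead $c\delta^{-\beta}<2$, then $\delta\gtrsim_{d}1$, and we take $N=1$. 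Both terms in \eqref{eq:bourgain-refined} are then $O(\delta^{1/2})\lesssim\delta^{d/(d+2)}$, again using $\delta\asymp1$.

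No step is a serious obstacle; everything reduces to exponent bookkeeping. The one point needing care is integrality and the range constraint: we must have $1\leq N\leq cR$ while keeping $N$ comparable to $\delta^{-\beta}$ with constants depending only on $d$ and $\Phi$. This is handled by the case split above. The dependence on $\eps$ enters only through Proposition~\ref{prop:bourgain}, and $c$ is independent of $\eps$, so the final constant is $C_{d,\eps}$ as required.
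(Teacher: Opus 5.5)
Your proposal is correct and follows the paper's approach of choosing the parameter $N$ in Proposition~\ref{prop:bourgain} and checking exponents. The only difference is your single choice $N\asymp\delta^{-1/\kappa_d}$, the smallest $N$ for which the high-frequency term is admissible, which covers the whole range $\delta\geq R^{-\kappa_d}$ at once; the paper instead balances the two terms with $N\asymp\delta^{-1/(d-1)}$ when $\delta\geq R^{-(d-1)}$, and switches to $N\asymp R$ on the window $R^{-\kappa_d}\leq\delta<R^{-(d-1)}$.
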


\begin{proof}
The localization gives $\delta\leq D_d$ for a fixed constant $D_d$.
After absorbing the finitely many bounded radii, assume that $R$ is large.
Let $c>0$ be the cutoff constant in Proposition~\ref{prop:bourgain}, and fix
$0<c_0<c/4$.

First suppose that
\[
 \delta\geq R^{-(d-1)}.
\]
If $\delta\geq(c_0/2)^{d-1}$, take $N=1$.  Since then
$(c_0/2)^{d-1}\leq\delta\leq D_d$, one has
\[
 \delta+\delta^{1/2}\lesssim_{d,c_0,D_d}\delta^{d/(d+2)},
\]
and \eqref{eq:bourgain-refined} gives the desired estimate.  Otherwise set
\[
 N:=\left\lfloor c_0\delta^{-1/(d-1)}\right\rfloor \asymp_{c_0}\delta^{-1/(d-1)}.
\]
Moreover,
$N\leq c_0R\leq cR$.  Therefore
\begin{align*}
 N\delta+N^{-(d-3)/2}\delta^{1/2}
 \lesssim_{d,c_0}\delta^{(d-2)/(d-1)}
 \leq\delta^{d/(d+2)}.
\end{align*}
The last inequality uses $\delta<1$ and
\[
 \frac{d-2}{d-1}\geq\frac d{d+2},
\]
which holds for every $d\geq4$.

It remains to consider
\[
 R^{-\kappa_d}\leq\delta<R^{-(d-1)}.
\]
When $d=4$, this interval is empty because $\kappa_4=3=d-1$.
For $d\geq5$, take $N=\lfloor c_0R\rfloor$.  For large $R$,
$N\asymp_{c_0}R$ and $1\leq N\leq cR$.  Since
$\delta<R^{-(d-1)}$,
\[
 \frac{R\delta}{\delta^{d/(d+2)}}
 =R\delta^{2/(d+2)}
 \leq R^{(4-d)/(d+2)}\leq1.
\]
Also, because $1/2-d/(d+2)=-(d-2)/(2(d+2))$ and
$\delta\geq R^{-\kappa_d}$,
\begin{align*}
 \frac{R^{-(d-3)/2}\delta^{1/2}}
 {\delta^{d/(d+2)}}
 &=R^{-(d-3)/2}
 \delta^{-(d-2)/(2(d+2))}\\
 &\leq
 R^{-(d-3)/2+
 \kappa_d(d-2)/(2(d+2))}=1.
\end{align*}
Thus both terms in \eqref{eq:bourgain-refined} have the required size.
\end{proof}

\section{Geometric preliminaries}

For a finite set $E\subset\Z^d$, define the fixed-distance incidence function
\begin{equation}\label{eq:rE-def}
 r_E(x):=\sum_{|y|=R}\one_E(x-y)
       =\#\{e\in E:|x-e|=R\}.
\end{equation}
Thus, for every finite $F\subset\Z^d$,
\begin{equation}\label{eq:calA-rE}
 I_R(E,F)=\sum_{x\in F}r_E(x),
 \qquad
 \langle\cA_R\one_E,\one_F\rangle=R^{2-d}I_R(E,F).
\end{equation}

\subsection{Common-sphere geometry}

We use the following elementary description.  Points
$b_0,\ldots,b_r$ are affinely independent when
$b_1-b_0,\ldots,b_r-b_0$ are linearly independent.

\begin{lemma}\label{lem:common-center-geometry}
Let $B=\{b_0,\ldots,b_r\}\subset\R^d$ be affinely independent, put
\[
 V:=\Span\{b_i-b_0:1\leq i\leq r\},\qquad A_B:=b_0+V,
\]
and let $c_B\in A_B$ be the unique point equidistant from the points of $B$.
Write $r_B:=|c_B-b_i|$ and
\[
 \Sigma_B:=\{e\in A_B:|e-c_B|=r_B\}.
\]
The common radius-$R$ centers of $B$ are
\begin{equation}\label{eq:XB-explicit}
 X_B=\{c_B+z:z\in V^\perp,\ |z|^2=R^2-r_B^2\}.
\end{equation}
Equivalently, $x\in X_B$ means that the radius-$R$ sphere centered at $x$
contains every point of $B$ on its boundary.  Thus $X_B$ is an embedded
$(d-r-1)$-sphere if $r_B<R$, a singleton if
$r_B=R$, and empty if $r_B>R$.  Moreover, for $e\in A_B$ and $x\in X_B$,
\[
 |x-e|=R\quad\Longleftrightarrow\quad e\in\Sigma_B.
\]
\end{lemma}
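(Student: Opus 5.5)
The plan is to prove Lemma~\ref{lem:common-center-geometry} by elementary linear algebra: decompose any candidate center orthogonally relative to the affine span $A_B$, then read off each claim. First I will verify that $c_B$ exists and is unique. Write $c=b_0+v$ with $v\in V$. The conditions $|c-b_i|^2=|c-b_0|^2$ for $1\le i\le r$ become the linear system
\[
 2\,v\cdot(b_i-b_0)=|b_i-b_0|^2,\qquad 1\leq i\leq r.
\]
The Gram matrix of the vectors $b_i-b_0$ is invertible by affine independence, so this system has exactly one solution $v\in V$.

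Next I will characterize $X_B$. Any $x\in\R^d$ decomposes uniquely as $x=c_B+w+z$ with $w\in V$ and $z\in V^\perp$. Since $c_B-b_i\in V$, Pythagoras gives
\[
 |x-b_i|^2=|c_B+w-b_i|^2+|z|^2.
\]
If $|x-b_i|=R$ for every $i$, then $c_B+w\in A_B$ is equidistant from all the $b_i$. Uniqueness then forces $w=0$, and so $|z|^2=R^2-r_B^2$. Conversely, every point $c_B+z$ with $z\in V^\perp$ and $|z|^2=R^2-r_B^2$ is at distance $R$ from each $b_i$. This proves \eqref{eq:XB-explicit}.

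The shape of $X_B$ then follows from $\dim V^\perp=d-r$. The set $\{z\in V^\perp:|z|^2=s\}$ is a $(d-r-1)$-sphere when $s>0$, the single point $\{0\}$ when $s=0$, and empty when $s<0$. The equivalence with spheres of radius $R$ passing through $B$ is simply the definition of $X_B$.

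Finally, for $e\in A_B$ and $x=c_B+z\in X_B$, I have $e-c_B\in V$ and $z\perp V$. Hence
\[
 |x-e|^2=|e-c_B|^2+|z|^2=|e-c_B|^2+R^2-r_B^2,
\]
so $|x-e|=R$ holds exactly when $|e-c_B|=r_B$, that is, when $e\in\Sigma_B$.

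No step here is a real obstacle. The only point that needs care is the existence and uniqueness of $c_B$ within $A_B$, which rests on invertibility of the Gram matrix; everything else follows from the orthogonal decomposition $\R^d=V\oplus V^\perp$.
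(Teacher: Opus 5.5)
Your proposal is correct and follows essentially the paper's argument: the orthogonal splitting $\R^d=V\oplus V^\perp$ around $c_B$, with Pythagoras used both for the description of $X_B$ and for the final equivalence on $A_B$. The differences are cosmetic. You verify existence and uniqueness of $c_B$ through the Gram matrix, which the paper takes as part of the setup, and you use that uniqueness to force the $V$-component $w$ to vanish. The paper instead reads off the equidistant locus $c_B+V^\perp$ directly from the linear equations obtained by subtracting squared distances.
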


\begin{proof}
Subtracting the squared-distance equations for $b_i$ and $b_0$ shows that a
point is equidistant from all points of $B$ precisely when it belongs to
$c_B+V^\perp$.  Write such a point as $x=c_B+z$, with $z\in V^\perp$.
Since $c_B-b_i\in V$, orthogonality gives
\[
 |x-b_i|^2=|z|^2+|c_B-b_i|^2=|z|^2+r_B^2.
\]
This proves \eqref{eq:XB-explicit} and the description of $X_B$.  The same
calculation, with $e-c_B\in V$, gives
$|x-e|^2=|z|^2+|e-c_B|^2$ and proves the final equivalence.
\end{proof}

For every affine basis $B$ that occurs below, $X_B\neq\varnothing$, and hence
$r_B\leq R$.  If $r=0$, then $\Sigma_B$ is a singleton; if $r=1$, it
has at most two points; and if $r\geq2$, it is an embedded
$(r-1)$-sphere in $A_B$.  Set
\begin{equation}\label{eq:Lr-definition-expanded}
 L_r:=
 \begin{cases}
  1,&r=0,1,\\
  \min\bigl(|E|,R^{r-2}\bigr),&r\geq2.
 \end{cases}
\end{equation}
Then Lemma~\ref{lem:embedded-sphere} gives, for every auxiliary $\eta>0$,
\begin{equation}\label{eq:Lr-bound}
 \#(E\cap\Sigma_B)\lesssim_{d,\eta}R^\eta L_r.
\end{equation}
In the proof of Lemma~\ref{lem:integer-moments} below we use
\eqref{eq:Lr-bound} with $\eta=\eps/(2j)$.  The resulting accumulated
power is recorded explicitly.

\subsection{Integer moments}

\begin{lemma}\label{lem:integer-moments}
Let $j\geq2$ and $j-1\leq d-2$.  Then
\begin{equation}\label{eq:integer-moment-rank-sum}
 \sum_{x\in\Z^d}r_E(x)^j
 \lesssim_{d,j,\eps}
 R^\eps\sum_{r=0}^{j-1}
 |E|^{r+1}L_r^{j-r-1}R^{d-r-2}.
\end{equation}
Consequently,
\begin{equation}\label{eq:integer-moment}
 \sum_{x\in\Z^d}r_E(x)^j
 \lesssim_{d,j,\eps}
 R^{d-2+\eps}|E|^{j-1}
 +R^{d-j-1+\eps}|E|^j.
\end{equation}
\end{lemma}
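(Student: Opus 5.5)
Expand the $j$-th moment as a count of tuples:
\[
 \sum_{x}r_E(x)^j=\#\{(x,e_1,\ldots,e_j):x\in\Z^d,\ e_i\in E,\ |x-e_i|=R\}.
\]
For each such tuple we extract a canonical affine basis greedily. Set $b_0:=e_1$, and for $i=2,\ldots,j$ adjoin $e_i$ to the current basis whenever $e_i$ is not in the affine span of the basis built so far. This gives an affinely independent set $B=\{b_0,\ldots,b_r\}\subset\{e_1,\ldots,e_j\}$ with $0\leq r\leq j-1$, and every $e_i$ lies in $A_B$. We record the rank $r$ and the set $I\subset\{1,\ldots,j\}$ of positions at which basis points were chosen; there are $O_j(1)$ such patterns. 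For fixed $r$ and a fixed pattern we count in three stages: choose $B$, then $x$, then the non-basis points.

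\emph{Step 1 (basis).} There are at most $|E|^{r+1}$ choices for the ordered basis.

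\emph{Step 2 (center).} Given $B$, the center $x$ lies in $X_B\cap\Z^d$. By Lemma~\ref{lem:common-center-geometry}, $X_B$ is either an embedded $(d-r-1)$-sphere of radius $(R^2-r_B^2)^{1/2}\leq R$, or a singleton, or empty. The hypothesis $j-1\leq d-2$ gives $r\leq d-2$, so $X_B$ has dimension $k=d-r-1\geq1$, which is the range in which Lemma~\ref{lem:embedded-sphere} applies. That lemma gives
\[
 \#(X_B\cap\Z^d)\lesssim_{d,\eta}R^{d-r-2+\eta}.
\]

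\emph{Step 3 (non-basis points).} Each of the remaining $j-r-1$ points $e_i$ lies in $A_B$ and satisfies $|x-e_i|=R$. By the final equivalence in Lemma~\ref{lem:common-center-geometry}, this forces $e_i\in E\cap\Sigma_B$, independently of which $x\in X_B$ was chosen. By \eqref{eq:Lr-bound}, each such point then has $\lesssim R^\eta L_r$ choices. In the cases $r=0,1$ we have $L_r=1$; for $r\geq2$ we also have the trivial bound $|E|$, which is built into $L_r$.

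\emph{Combining.} Multiplying the three stages bounds the rank-$r$ contribution by
\[
 |E|^{r+1}\,R^{d-r-2}\,L_r^{j-r-1}\,R^{j\eta}.
\]
Taking $\eta=\eps/(2j)$ and summing over the $O_j(1)$ patterns gives \eqref{eq:integer-moment-rank-sum}.

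\emph{Consequence \eqref{eq:integer-moment}.} Each term should be bounded by the larger of its two endpoint values.
\begin{itemize}
\item For $r=0$ the term is $|E|R^{d-2}$, which is at most $R^{d-2}|E|^{j-1}$ since $j\geq2$.
\item For $r=1$ the term is $|E|^2R^{d-3}\leq R^{d-2}|E|^{j-1}$ when $j\geq3$. When $j=2$, the value $r=1=j-1$ is the top rank, and the term $|E|^2R^{d-3}$ is exactly the second term $R^{d-j-1}|E|^j$.
\item For $2\leq r\leq j-1$, bound $L_r^{j-r-1}\leq |E|^{\theta}R^{(r-2)(j-r-1-\theta)}$ with $\theta\in[0,j-r-1]$ chosen freely. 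A cleaner route is to write the term as $|E|^{r+1}R^{d-r-2}\min(|E|,R^{r-2})^{j-r-1}$ and check that it is dominated by $\max(R^{d-2}|E|^{j-1},R^{d-j-1}|E|^j)$ in each regime.
\end{itemize}

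\emph{Regime $|E|\leq R$.} Use $L_r\leq|E|$. The term becomes $|E|^jR^{d-r-2}$, and the ratio of this to $R^{d-2}|E|^{j-1}$ is $|E|R^{-r}\leq1$.

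\emph{Regime $|E|\geq R$.} Use $L_r\leq R^{r-2}$. The ratio of the term to $R^{d-j-1}|E|^j$ is
\[
 |E|^{r+1-j}R^{(j-r-1)(r-1)}=\bigl(R^{r-1}/|E|\bigr)^{j-r-1}.
\]
This is at most $1$ when $|E|\geq R^{r-1}$. If instead $R\leq|E|<R^{r-1}$, the bound must come from interpolating between the two endpoint terms. This exponent bookkeeping, namely showing that the intermediate ranks are dominated by the two extreme ranks via convexity in $\log|E|$, is the step I expect to need the most care. I would handle it by observing that $\log$ of each term is piecewise linear and convex in $\log|E|$, so it suffices to compare the terms at the breakpoints $|E|=R$ and $|E|=R^{r-2}$.

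The geometric steps are routine given the two lemmas. The main thing to verify is the dimension constraint $r\leq d-2$, which guarantees that $X_B$ has dimension at least one so that Lemma~\ref{lem:embedded-sphere} applies.
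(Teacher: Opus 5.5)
Your proof of \eqref{eq:integer-moment-rank-sum} is correct and is essentially the paper's argument. You sort tuples by affine rank, pay $|E|^{r+1}$ for the basis, pay $R^{d-r-2+\eta}$ for the centers on $X_B$ (this is where $r\le j-1\le d-2$ and $R\ge1$ enter), and pay $R^{\eta}L_r$ for each remaining entry on $E\cap\Sigma_B$. Your greedy canonical basis is only a cosmetic variant of the paper's ``choose $r+1$ basis positions.''

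The gap is in the deduction of \eqref{eq:integer-moment}. For $2\le r\le j-2$ you leave the window $R\le|E|<R^{r-1}$ open. The convexity argument you propose to close it does not work as stated. Because of the minimum in $L_r$, the logarithm of the rank-$r$ term is \emph{concave} in $\log|E|$: it has slope $j$ below $|E|=R^{r-2}$ and slope $r+1$ above. The maximum of the two target terms is convex. So no ``convex function, check at two points'' principle applies.

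The fix is already in your own computations. The ratio $|E|R^{-r}$ that you obtained from $L_r\le|E|$ is at most $1$ for every $|E|\le R^{r}$, not only for $|E|\le R$. That range overlaps the range $|E|\ge R^{r-1}$ covered by your second ratio $(R^{r-1}/|E|)^{j-r-1}$.

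The paper's route is shorter still. For every $r\le j-2$, the ratio of the rank-$r$ term to $R^{d-2}|E|^{j-1}$ equals $(L_r/|E|)^{j-r-2}\,L_r/R^{r}$. This is at most $1$ because $L_r\le|E|$ and $L_r\le R^r$, both of which hold also for $r=0,1$. The top rank $r=j-1$ gives exactly $R^{d-j-1}|E|^j$. With this, your separate treatment of $r=0,1$ and your regime split in $|E|$ become unnecessary.
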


\begin{proof}
The claim is trivial for $E=\varnothing$, so assume $|E|\geq1$.
Expanding the $j$-th moment counts ordered configurations
\[
 (x,e_1,\ldots,e_j)\in\Z^d\times E^j,
 \qquad |x-e_i|=R\quad\text{for }1\leq i\leq j.
\]
We organize these configurations according to their affine rank
\[
 r=\dim\Aff\{e_1,\ldots,e_j\},
 \qquad 0\leq r\leq j-1.
\]
Choose $r+1$ tuple positions forming an affine basis $B$.  There are
$O_j(1)$ choices of positions and at most $|E|^{r+1}$ choices of their
values.  Since $B$ is an affine basis of the tuple, every remaining tuple
entry lies in $A_B$.  Since the tuple contributes, it has a common center
$x\in X_B\cap\Z^d$.  Lemma~\ref{lem:common-center-geometry} then implies
that every remaining tuple entry lies on $E\cap\Sigma_B$.  Hence each
remaining entry has at most
\[
 O_{d,j,\eps}\bigl(R^{\eps/(2j)}L_r\bigr)
\]
choices by \eqref{eq:Lr-bound}.

The admissible centers lie in $X_B\cap\Z^d$.  Since the tuple contributes,
$X_B\neq\varnothing$.  By Lemma~\ref{lem:common-center-geometry}, $X_B$ is
either a singleton or a nondegenerate embedded $(d-r-1)$-sphere of radius
at most $R$.  Lemma~\ref{lem:embedded-sphere}, with
$\eta=\eps/(2j)$ and including its $O(1)$ convention for a singleton,
gives
\[
 |X_B\cap\Z^d|
 \lesssim_{d,j,\eps}
 R^{d-r-2+\eps/(2j)}.
\]
There are $j-r-1$ remaining tuple entries, so the accumulated auxiliary
power is
\[
 (j-r-1)\frac{\eps}{2j}+\frac{\eps}{2j}
 =\frac{(j-r)\eps}{2j}
 \leq\frac{\eps}{2}.
\]
Consequently, the contribution of affine rank $r$ is at most
\[
 \lesssim_{d,j,\eps}
 R^{\eps/2}
 |E|^{r+1}L_r^{j-r-1}R^{d-r-2}.
\]
Summing over the $j$ possible ranks and using
$R^{\eps/2}\leq R^\eps$ proves
\eqref{eq:integer-moment-rank-sum}.

It remains to derive \eqref{eq:integer-moment}.  At rank $r=j-1$, the
corresponding term in \eqref{eq:integer-moment-rank-sum} is
\[
 R^{d-j-1+\eps}|E|^j,
\]
which is the second term in \eqref{eq:integer-moment}.

For $r\leq j-2$, consider the ratio
\[
 \frac{|E|^{r+1}L_r^{j-r-1}R^{d-r-2}}
 {R^{d-2}|E|^{j-1}}
 =
 \left(\frac{L_r}{|E|}\right)^{j-r-2}
 \frac{L_r}{R^r}.
\]
Clearly, $L_r\leq|E|$ for every $r$.  We also have $L_r\leq R^r$:
this is immediate for $r=0,1$, while for $r\geq2$ it follows from
$L_r\leq R^{r-2}$ and $R\geq1$.  Thus every rank $r\leq j-2$ is bounded
by the first term in \eqref{eq:integer-moment}.  This proves
\eqref{eq:integer-moment}.
\end{proof}

\section{The even-dimensional low-density estimate}

\begin{theorem}\label{thm:geometric-density}
Let $d\geq4$ be even, let $C_0\geq1$ be fixed, and let
$E\subset\Z^d$ be finite.  If $|E|\leq C_0R^{d/2}$, then
\begin{equation}\label{eq:sparse-moment}
 \sum_{x\in\Z^d}r_E(x)^{\frac{d+2}{2}}
 \lesssim_{d,C_0,\eps}R^{d-2+\eps}|E|^{d/2}.
\end{equation}
Consequently, for every finite $F\subset\Z^d$,
\begin{equation}\label{eq:sparse-incidence}
 I_R(E,F)
 \lesssim_{d,C_0,\eps}
 R^{\frac{2(d-2)}{d+2}+\eps}(|E||F|)^{\frac d{d+2}}.
\end{equation}
\end{theorem}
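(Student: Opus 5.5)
The plan is to apply Lemma~\ref{lem:integer-moments} with $j=(d+2)/2$, which is an integer because $d$ is even. The hypothesis $j-1\leq d-2$ reads $d/2\leq d-2$, i.e. $d\geq4$, so the lemma applies. Estimate \eqref{eq:integer-moment} then gives
\[
 \sum_{x}r_E(x)^{\frac{d+2}{2}}
 \lesssim_{d,\eps}
 R^{d-2+\eps}|E|^{d/2}+R^{d-\frac{d+2}{2}-1+\eps}|E|^{\frac{d+2}{2}}.
\]

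The first term is already the desired bound. For the second term, note that $d-j-1=(d-4)/2$. Its ratio to the first term is therefore
\[
 R^{(d-4)/2-(d-2)}|E|=R^{-d/2}|E|\leq C_0,
\]
using the hypothesis $|E|\leq C_0R^{d/2}$. This proves \eqref{eq:sparse-moment}. The only point to watch is that the crude bound \eqref{eq:integer-moment}, rather than the rank-refined sum \eqref{eq:integer-moment-rank-sum}, suffices here. It does, because the top-rank term $r=j-1$ is exactly what the size condition controls.

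For the incidence bound, use \eqref{eq:calA-rE} and apply H\"older's inequality with exponents $j=(d+2)/2$ and $j'=(d+2)/d$:
\[
 I_R(E,F)=\sum_{x\in F}r_E(x)
 \leq\Bigl(\sum_x r_E(x)^{\frac{d+2}{2}}\Bigr)^{\frac{2}{d+2}}|F|^{\frac d{d+2}}.
\]
Inserting \eqref{eq:sparse-moment} yields
\[
 I_R(E,F)\lesssim R^{\frac{2(d-2)}{d+2}+\eps}|E|^{\frac{d}{d+2}}|F|^{\frac d{d+2}},
\]
after renaming $\eps$. This is \eqref{eq:sparse-incidence}.

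Nothing here is a genuine obstacle. The substantive work is already contained in Lemma~\ref{lem:integer-moments}. The one step requiring care is checking that the exponent arithmetic makes the rank-$(j-1)$ term comparable to the main term exactly at the threshold $|E|\asymp R^{d/2}$. This is why the hypothesis takes that form, and why the four-dimensional case later needs the separate bridge estimate for larger $|E|$.
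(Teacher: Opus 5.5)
Your proposal is correct and follows the paper's proof essentially verbatim: you apply Lemma~\ref{lem:integer-moments} with $j=(d+2)/2$, use $|E|\leq C_0R^{d/2}$ to absorb the top-rank term $R^{(d-4)/2+\eps}|E|^{(d+2)/2}$ into $R^{d-2+\eps}|E|^{d/2}$, and then apply H\"older's inequality with exponents $(d+2)/2$ and $(d+2)/d$. You also check the lemma's hypothesis $j-1\leq d-2$ (equivalently $d\geq4$) explicitly, which the paper leaves implicit.
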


\begin{proof}[Proof of Theorem~\ref{thm:geometric-density}]
Write $d=2k$.  Then $(d+2)/2=k+1$.  Lemma~\ref{lem:integer-moments} gives
\[
 \sum_xr_E(x)^{k+1}
 \lesssim R^{2k-2+\eps}|E|^k+R^{k-2+\eps}|E|^{k+1}
 \lesssim_{d,C_0,\eps}R^{2k-2+\eps}|E|^k,
\]
where we used $|E|\leq C_0R^k$ in the last inequality.  This proves
\eqref{eq:sparse-moment}.

Finally, H\"older's inequality with conjugate exponents $(d+2)/2$ and
$(d+2)/d$ gives \eqref{eq:sparse-incidence}.
\end{proof}

\section{The four-dimensional bridge estimate}

\begin{theorem}\label{thm:d4-bridge}
Under the localized setup above, assume $d=4$.  This theorem requires only
$R^2\in\mathbb N$.  Let $C_1\geq1$ be fixed, and suppose, after
interchanging $E$ and $F$ if necessary, that
\begin{equation}\label{eq:d4-bridge-range}
 R^2<|E|\leq|F|,
 \qquad
 |E||F|\leq C_1R^5.
\end{equation}
Then
\begin{equation}\label{eq:d4-bridge-incidence}
 I_R(E,F)
 \lesssim_{C_1,\eps}
 R^{2/3+\eps}(|E||F|)^{2/3}.
\end{equation}
\end{theorem}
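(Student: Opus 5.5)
The proof will be a direct application of Mudgal's point--sphere incidence estimate \cite{Mudgal} to the configuration of points $E$ and the radius-$R$ spheres centered at the points of $F$. The arithmetic hypothesis on $R^2$ plays no role, which is why only $R^2\in\mathbb N$ is assumed. By \eqref{eq:calA-rE}, $I_R(E,F)$ is exactly the number of incidences between the point set $E\subset\Z^4$ and the family $\mathcal S_F=\{\partial B(x,R):x\in F\}$ of $|F|$ distinct spheres in $\R^4$. All of them have the same radius $R$, and all lie in a box of side $\asymp R$ by the localization.

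The first step is to recall Mudgal's bound in the form needed. I expect it to read, schematically,
\[
 I(P,\mathcal S)\lesssim_\eps R^{\eps}\Bigl(|P|^{\alpha}|\mathcal S|^{\beta}+|P|+|\mathcal S|\Bigr)+(\text{degenerate term}).
\]
Here the degenerate term accounts for points and spheres lying on common lower-dimensional spheres (circles and $2$-spheres). It must be controlled by a non-concentration hypothesis. The second step is to verify that hypothesis, which I will do with Lemma~\ref{lem:embedded-sphere} and Lemma~\ref{lem:common-center-geometry}:
\begin{itemize}
 \item any embedded circle of radius $\leq R$ contains only $O_\eps(R^\eps)$ lattice points;
 \item any embedded $2$-sphere contains $O_\eps(R^{1+\eps})$ lattice points;
 \item for an affine basis $B$ of rank $r$, the admissible centers lie on $X_B$, a $(3-r)$-sphere.
\end{itemize}
Hence the number of spheres of $\mathcal S_F$ through a fixed non-collinear triple of points, or through a fixed circle, is $O_\eps(R^{\eps})$. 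This is the uniform ``no rich lower-dimensional structure'' input that point--sphere incidence theorems in $\R^4$ require. It is where the lattice setting replaces the genericity assumptions of the real-variable theory.

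The third step is bookkeeping: each term of Mudgal's bound must be dominated by $R^{2/3+\eps}(|E||F|)^{2/3}$ in the range \eqref{eq:d4-bridge-range}.
\begin{itemize}
 \item \emph{Linear terms.} $|E|\leq|F|$, and $|F|\leq R^{2/3}(|E||F|)^{2/3}$ is equivalent to $|F|\leq R^2|E|^2$. This follows from $|F|\leq C_1R^5/|E|$ together with $|E|>R^2$, since then $R^5/|E|\leq R^2|E|^2$ reduces to $R^3\leq|E|^3$.
 \item \emph{Main term.} Its exponents exceed $2/3$ by some amount. I would write it as $(|E||F|)^{2/3}$ times a surplus power. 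The surplus is absorbed using $|E||F|\leq C_1R^5$ when it is a power of the product, and using $|E|>R^2$ together with $|E|\leq|F|$ when it is unbalanced between $E$ and $F$.
\end{itemize}
The lower bound $|E|>R^2$ is exactly what makes the bridge complement Theorem~\ref{thm:geometric-density}, which handles $|E|\lesssim R^2$.

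The main obstacle I anticipate is matching the precise hypotheses and exponents of Mudgal's theorem. If his main term is $(|P||\mathcal S|)^{3/4}$, the surplus is $(|E||F|)^{1/12}\leq R^{5/12}\leq R^{2/3}$, which is fine. If instead his bound carries an explicit dependence on the box size or on the rich-circle parameter, I would feed in the $R^\eps$ circle bound and the $R^{1+\eps}$ bound for $2$-spheres, and recheck that the resulting exponent of $R$ stays at most $2/3$ throughout the window $R^2<|E|\leq|F|$, $|E||F|\lesssim R^5$. If some term fails at the edge of the window, I would try the first thing available: dyadically decompose $F$ by the values of $r_E$, and treat the low-richness part by the trivial bound $I_R\leq |F|\max r_E$ combined with the third moment \eqref{eq:integer-moment} with $j=3$.
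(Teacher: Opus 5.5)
Your proposal is correct and is essentially the paper's proof. After translating so that $\|e\|_\infty\lesssim R$ on $E\cup F$, the paper applies Mudgal's four-dimensional bound, $I_R(E,F)\lesssim_\eps R^\eps\bigl(|E|^{9/11}|F|^{8/11}+|E|+|F|\bigr)$; its only hypothesis is a coordinate bound on the integer point set, so the lattice non-concentration you plan to verify is already built in. The linear terms are handled exactly as you do, and the main term follows your plan: $|E|\leq|F|$ gives $(|E||F|)^{17/22}=(|E||F|)^{2/3}(|E||F|)^{7/66}$, and the surplus factor is at most $C_1^{7/66}R^{35/66}\leq C_1^{7/66}R^{2/3}$.
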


\begin{proof}
Let $Q$ be the half-open lattice cube from the localization.  Since
$E\cup F\subset Q^*$, also after the possible interchange of $E$ and $F$,
after a simultaneous translation by a lattice vector we may assume that
\[
 \|x\|_\infty\lesssim R
 \qquad\text{for every }x\in E\cup F.
\]
This translation preserves cardinalities and $I_R(E,F)$.

For each $f\in F$, consider the sphere in $\mathbb R^4$ centered at $f$
and of radius $R$.  Its lattice points are precisely $f+S_R$, and
distinct points of $F$ give distinct spheres, so the family has cardinality
$|F|$.  Moreover,
\[
 e\in f+S_R
 \quad\Longleftrightarrow\quad
 |e-f|=R.
\]
Thus the incidences between the points of $E$ and these spheres are counted
exactly by $I_R(E,F)$.  Mudgal's four-dimensional incidence estimate,
in \cite[Section~6, before (6.3)]{Mudgal}, therefore applies.  Its coordinate hypothesis concerns
only the point set and is satisfied here, since
\[
 \|e\|_\infty\lesssim R
 \qquad(e\in E),
\]
and hence $\|e\|_\infty\leq R^2$ for all sufficiently large $R$.  
Applying this estimate gives
\begin{equation}\label{eq:mudgal-d4-local}
 I_R(E,F)
 \lesssim_{\eps}
 R^\eps\left(
 |E|^{9/11}|F|^{8/11}+|E|+|F|
 \right).
\end{equation}

It remains to compare the three terms with the desired scale.  Since
$|E|\leq|F|$,
\[
 |E|^{9/11}|F|^{8/11}
 \leq (|E||F|)^{17/22}.
\]
Using \eqref{eq:d4-bridge-range},
\begin{align*}
 (|E||F|)^{17/22}
 &=(|E||F|)^{2/3}(|E||F|)^{7/66}\\
 &\leq C_1^{7/66}R^{35/66}(|E||F|)^{2/3}\\
 &\leq C_1^{7/66}R^{2/3}(|E||F|)^{2/3}.
\end{align*}
Because the sets are nonempty and $|E|\leq|F|$,
\[
 |E|\leq(|E||F|)^{2/3}
 \leq R^{2/3}(|E||F|)^{2/3}.
\]
Finally,
\begin{align*}
 \frac{|F|}{R^{2/3}(|E||F|)^{2/3}}
 &=\left(\frac{|F|}{R^2|E|^2}\right)^{1/3}\\
 &\leq C_1^{1/3}\frac{R}{|E|}\\
 &\leq C_1^{1/3}R^{-1}.
\end{align*}
Plugging these estimates into \eqref{eq:mudgal-d4-local} proves
\eqref{eq:d4-bridge-incidence}.
\end{proof}

\section{The odd-dimensional low-density estimate}

\begin{theorem}
\label{thm:odd-geometric-density}
Assume $d=2k-1\geq5$, and let $C_1\geq1$ be fixed.  Let
$E,F\subset\Z^d$ be finite and suppose, after interchanging $E$ and $F$ if
necessary, that
\begin{equation}\label{eq:odd-geometric-range}
 |E|\leq |F|,
 \qquad
 |E||F|\leq C_1R^{2d-\kappa_d}.
\end{equation}
Then
\begin{equation}\label{eq:odd-geometric-incidence}
 I_R(E,F)
 \lesssim_{d,C_1,\eps}
 R^{\frac{2(d-2)}{d+2}+\eps}
 (|E||F|)^{\frac d{d+2}}.
\end{equation}
\end{theorem}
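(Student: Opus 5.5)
We will prove \eqref{eq:odd-geometric-incidence} by applying H\"older's inequality with the integer exponent $j:=k+1=(d+3)/2$ instead of the half-integral endpoint exponent $(d+2)/2$, and then bounding the $j$-th moment of $r_E$ with Lemma~\ref{lem:integer-moments}. The lemma is available because $j-1=k\leq d-2$ for every odd $d\geq5$; the case $d=5$, $k=3$ is the borderline one. Writing $I_R(E,F)=\sum_{x\in F}r_E(x)$ as in \eqref{eq:calA-rE}, H\"older gives
\[
 I_R(E,F)\leq |F|^{\frac{j-1}{j}}\Bigl(\sum_x r_E(x)^j\Bigr)^{1/j}.
\]
We raise the target to the power $j$ and use $\frac{dj}{d+2}=\frac{d(d+3)}{2(d+2)}$ and $(j-1)-\frac{dj}{d+2}=\frac1{d+2}$. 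It then suffices to show, for each affine rank $0\leq r\leq k$ in \eqref{eq:integer-moment-rank-sum}, that
\[
 |F|^{\frac1{d+2}}\,|E|^{r+1}L_r^{\,j-r-1}R^{d-r-2}
 \lesssim R^{\frac{(d-2)(d+3)}{d+2}}|E|^{\frac{d(d+3)}{2(d+2)}}
\]
up to $R^\eps$. This is the point where we keep the extra factor $|F|^{1/(d+2)}$ instead of discarding it. We will estimate it jointly with the rank term, using the product hypothesis $|F|\leq C_1R^{2d-\kappa_d}/|E|$.

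After this substitution, each rank condition becomes a comparison of monomials in $|E|$ and $R$. By \eqref{eq:odd-geometric-range}, $|E|\leq|F|$ gives $1\leq|E|\leq (C_1R^{2d-\kappa_d})^{1/2}$. We bound $L_r$ by $|E|$ or by $R^{r-2}$, whichever is favorable. The comparison is then between two powers of $|E|$ times powers of $R$, so it suffices to verify it at the two endpoints $|E|=1$ and $|E|\asymp R^{d-\kappa_d/2}$, where $\kappa_d$ is given by \eqref{eq:kappa-def}. For the low ranks $r\leq k-1$ we reuse the reduction from the proof of Lemma~\ref{lem:integer-moments}: these terms are dominated by $R^{d-2}|E|^{k}$. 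We therefore check only two terms, $R^{d-2}|E|^k$ and the top-rank term $R^{d-k-2}|E|^{k+1}$, each multiplied by $|F|^{1/(d+2)}$. For the first term the exponent of $|E|$ is $k-\frac1{d+2}$, compared with $\frac{d(d+3)}{2(d+2)}$ on the right. Because $k=\frac{d+1}{2}$, this leaves a negative power of $|E|$ on the left, so it is worst at $|E|=1$. There it reduces to $\kappa_d\geq$ an explicit quantity, which is an elementary inequality in $d$.

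The main obstacle is the top-rank term $r=k$, which is the genuinely new configuration count, with $|E|^{k+1}$ points and $R^{d-k-2}$ centres. In that term the power of $|E|$ on the left exceeds the one on the right by $\frac{d+2}{2}\cdot\frac{1}{d+2}\cdot(\ldots)$-type amounts. So the worst case is the largest admissible $|E|$, namely $|E|^2\asymp R^{2d-\kappa_d}$, where $E$ and $F$ are balanced. Here the exact value $\kappa_d=(d-3)(d+2)/(d-2)$ must make the exponents of $R$ match with no room to spare, and I expect equality at $d=5$, the critical case mentioned in the introduction. I will first carry out this exponent computation for general odd $d$ and confirm the inequality with slack for $d\geq7$. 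If the top-rank term fails at some intermediate rank instead, the fallback is to use $L_r\leq R^{r-2}$ rather than $L_r\leq|E|$ for $r\geq2$, and to split the range of $|E|$ at $R^{r-2}$. Collecting the $R^\eps$ losses from Lemma~\ref{lem:integer-moments} and taking $j$-th roots will then give \eqref{eq:odd-geometric-incidence}.
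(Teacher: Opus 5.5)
Your proposal has a genuine gap in the low-rank terms, and it cannot be closed by bookkeeping alone.

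First, the ``first term'' check is miscomputed. After substituting $|F|\le C_1R^{2d-\kappa_d}/|E|$ into $|F|^{1/(d+2)}R^{d-2}|E|^k$, the power of $|E|$ is $k-\frac1{d+2}$. This \emph{equals} $\frac{d(d+3)}{2(d+2)}$, so no negative power of $|E|$ is left over. The remaining power of $R$ is $R^{(d+2-\kappa_d)/(d+2)}=R^{1/(d-2)}$. The ``elementary inequality'' you would need is $\kappa_d\ge d+2$, which is false for every $d$.

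More fundamentally, H\"older at exponent $k+1$ is itself lossy when $F$ is much larger than $E$. For $|E|=1$ one has exactly $\sum_x r_E(x)^{k+1}=|S_R|\asymp R^{d-2}$. Your bound $I_R\le|F|^{k/(k+1)}\bigl(\sum_x r_E^{k+1}\bigr)^{1/(k+1)}$ then exceeds the target by $(|F|/R^{d-2})^{1/((k+1)(d+2))}$. This is a positive power of $R$ once $|F|\gg R^{d-2}$, and the product hypothesis allows $|F|\approx R^{2d-\kappa_d}$. No estimate of the moment, rank by rank or otherwise, can repair this, and your fallback (splitting at $|E|=R^{r-2}$ for $r\ge2$) does not touch it.

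The missing idea is a case split before applying H\"older.

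If $|F|^d\ge R^{d(d-2)}|E|^2$, use the first moment $I_R(E,F)\le\sum_x r_E(x)\lesssim R^{d-2}|E|$. Its ratio to the target is exactly $\bigl(R^{d(d-2)}|E|^2/|F|^d\bigr)^{1/(d+2)}\le1$.

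In the complementary balanced regime you get $|F|^{1/(d+2)}\le R^{\frac{d-2}{d+2}}|E|^{\frac{2}{d(d+2)}}$. This is much stronger than the product hypothesis for small $|E|$. The normalized rank-$r$ term becomes $|E|^{r-k+1+\frac1d}L_r^{k-r}R^{-r}$.

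Even here you must not collapse the ranks $r\le k-1$ into $R^{d-2}|E|^k$ as in \eqref{eq:integer-moment}. That collapse produces the ratio $|E|^{1/d}$, which is unbounded; for instance, for $d=5$ the rank-two term $|E|^3R$ is inflated to $R^3|E|^3$. Keep each rank separately:
\begin{itemize}
\item For $r=0,1$, $L_r=1$ gives $|E|^{r-k+1+1/d}\le1$, since $k\ge3$.
\item For $2\le r\le k-2$, split at $|E|=R^{r-2}$ to get $R^{-2+(r-2)/d}$.
\item For $r=k-1$, use $L_{k-1}\le R^{k-3}$ together with $|E|^2\le C_1R^{2d-\kappa_d}$.
\end{itemize}

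Your top-rank analysis is correct and matches the paper: worst case $|E|^2\asymp R^{2d-\kappa_d}$, net exponent $-\frac{d-5}{d-2}$, equality at $d=5$. The paper likewise uses only $|E|\le|F|$ and the product hypothesis for that rank.
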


\begin{proof}
The claim is trivial if $E=\varnothing$, so assume $|E|\geq1$.  We first
handle the case in which $F$ is much larger than $E$.
If
\begin{equation}\label{eq:odd-first-moment-case}
 |F|^d\geq R^{d(d-2)}|E|^2,
\end{equation}
we just use the trivial first
moment estimate:
\[
 I_R(E,F)
 \leq\sum_{x\in\Z^d}r_E(x)
 \lesssim_d R^{d-2}|E|.
\]
Compared to the desired quantity, the ratio is
\begin{align*}
 \frac{R^{d-2}|E|}
 {R^{\frac{2(d-2)}{d+2}}(|E||F|)^{\frac d{d+2}}}
 &=
 \left(
  \frac{R^{d(d-2)}|E|^2}{|F|^d}
 \right)^{1/(d+2)}
 \leq1.
\end{align*}
Thus \eqref{eq:odd-geometric-incidence} follows in this case.

For the rest of the proof, we may therefore assume
\begin{equation}\label{eq:odd-balanced-case}
 |F|^d\leq R^{d(d-2)}|E|^2.
\end{equation}
Since $k\leq d-2$, H\"older's inequality and
\eqref{eq:integer-moment-rank-sum}, applied with $j=k+1$, give
\begin{align*}
 I_R(E,F)^{k+1}
 &=
 \left(\sum_{x\in F}r_E(x)\right)^{k+1}\\
 &\leq
 |F|^k\sum_{x\in\Z^d}r_E(x)^{k+1}\\
 &\lesssim_{d,\eps}
 R^\eps\sum_{r=0}^{k}
 |F|^k|E|^{r+1}L_r^{k-r}R^{d-r-2}.
\end{align*}
It is enough to show that every rank-$r$ summand, without the auxiliary
$R^\eps$ factor, is bounded by
\[
 \left[
  R^{\frac{2(d-2)}{d+2}}
  (|E||F|)^{\frac d{d+2}}
 \right]^{k+1}.
\]

Since $d=2k-1$, one has
\[
 \frac d{d+2}(k+1)
 =k-\frac1{d+2},
 \qquad
 \frac{2(d-2)}{d+2}(k+1)
 =\frac{(d-2)(d+3)}{d+2}.
\]
Consequently, after division by the preceding target, the normalized
rank-$r$ contribution is
\begin{equation}\label{eq:odd-rank-normalized-exact}
 R^{-\frac{(d-2)(d+3)}{d+2}+d-r-2}
 |E|^{r-k+1+\frac1{d+2}}
 |F|^{\frac1{d+2}}
 L_r^{k-r}.
\end{equation}
The assumption \eqref{eq:odd-balanced-case} gives
\[
 |F|^{1/(d+2)}
 \leq
 R^{\frac{d-2}{d+2}}
 |E|^{\frac{2}{d(d+2)}}.
\]
Using
\[
 \frac1{d+2}+\frac{2}{d(d+2)}=\frac1d,
\]
we see that \eqref{eq:odd-rank-normalized-exact} is at most
\begin{equation}\label{eq:odd-rank-normalized}
 |E|^{r-k+1+\frac1d}L_r^{k-r}R^{-r}.
\end{equation}

For $r=0,1$, since $L_r=1$, the expression in
\eqref{eq:odd-rank-normalized} is
\[
 |E|^{r-k+1+\frac1d}R^{-r}.
\]
Moreover,
\[
 r-k+1+\frac1d
 \leq 2-k+\frac1d
 \leq -1+\frac15<0.
\]
Since $|E|\geq1$ and $R\geq1$, the lowest two rank contributions are
bounded by one.

Now let $2\leq r\leq k-2$.  It remains to bound
\[
 |E|^{r-k+1+\frac1d}L_r^{k-r}R^{-r}.
\]
If $|E|\leq R^{r-2}$, then $L_r=|E|$, and this expression is at most
\[
 |E|^{1+\frac1d}R^{-r}
 \leq
 R^{(r-2)(1+\frac1d)-r}
 =
 R^{-2+\frac{r-2}{d}}.
\]
If $|E|\geq R^{r-2}$, then $L_r=R^{r-2}$.  Since
\[
 r-k+1+\frac1d
 \leq -1+\frac1d<0,
\]
we have
\[
 |E|^{r-k+1+\frac1d}
 \leq
 R^{(r-2)(r-k+1+\frac1d)}.
\]
It follows again that
\begin{align*}
 |E|^{r-k+1+\frac1d}L_r^{k-r}R^{-r}
 &\leq
 R^{(r-2)(r-k+1+\frac1d)}
 R^{(r-2)(k-r)}R^{-r}\\
 &=
 R^{-2+\frac{r-2}{d}}.
\end{align*}
In both cases,
\[
 -2+\frac{r-2}{d}
 \leq
 -2+\frac{k-4}{d}
 =
 -\frac{3d+7}{2d}<0.
\]
Thus every middle-rank contribution is bounded by one.

At rank $r=k-1$, the bound $L_{k-1}\leq R^{k-3}$ shows that
\eqref{eq:odd-rank-normalized} is at most
\[
 |E|^{1/d}L_{k-1}R^{1-k}
 \leq |E|^{1/d}R^{-2}.
\]
Since $|E|\leq|F|$ and \eqref{eq:odd-geometric-range} holds,
\[
 |E|^2\leq |E||F|
 \leq C_1R^{2d-\kappa_d}.
\]
Therefore
\[
 |E|^{1/d}R^{-2}
 \leq
 C_1^{1/(2d)}
 R^{\frac{2d-\kappa_d}{2d}-2}
 =
 C_1^{1/(2d)}R^{-1-\frac{\kappa_d}{2d}}
 \lesssim_{d,C_1}1.
\]

At the top rank $r=k$, we return to
\eqref{eq:odd-rank-normalized-exact}.  Since
\[
 d-k-2=\frac{d-5}{2},
\]
the top-rank contribution is
\begin{align*}
 &R^{-\frac{(d-2)(d+3)}{d+2}+\frac{d-5}{2}}
 |E|^{1+\frac1{d+2}}|F|^{\frac1{d+2}}\\
 &\qquad=
 R^{-\frac{(d-2)(d+3)}{d+2}+\frac{d-5}{2}}
 |E|(|E||F|)^{\frac1{d+2}}.
\end{align*}
Since $|E|\leq|F|$,
\[
 |E|\leq(|E||F|)^{1/2},
\]
and hence the last display is at most
\[
 R^{-\frac{(d-2)(d+3)}{d+2}+\frac{d-5}{2}}
 (|E||F|)^{\frac{d+4}{2(d+2)}}.
\]
Using \eqref{eq:odd-geometric-range}, this is bounded by
\[
 C_1^{\frac{d+4}{2(d+2)}}
 R^{-\frac{(d-2)(d+3)}{d+2}
   +\frac{d-5}{2}
   +(2d-\kappa_d)\frac{d+4}{2(d+2)}}.
\]
By the definition of $\kappa_d$,
\[
 -\frac{(d-2)(d+3)}{d+2}
 +\frac{d-5}{2}
 +(2d-\kappa_d)\frac{d+4}{2(d+2)}
 =
 -\frac{d-5}{d-2}\leq0.
\]
Thus the top-rank contribution is also $O_{d,C_1}(1)$.

We have proved that every rank contribution is bounded by the desired
quantity.  Summing the $k+1=O_d(1)$ ranks gives
\[
 I_R(E,F)^{k+1}
 \lesssim_{d,C_1,\eps}
 R^{\eps}
 \left[
  R^{\frac{2(d-2)}{d+2}}
  (|E||F|)^{\frac d{d+2}}
 \right]^{k+1}.
\]
This proves
\eqref{eq:odd-geometric-incidence}.
\end{proof}

\section{Proof of Proposition~\ref{prop:local-endpoint}}
\label{sec:local-synthesis}

The four theorems: Theorem~\ref{thm:analytic-density},
Theorem~\ref{thm:geometric-density},
Theorem~\ref{thm:d4-bridge},
Theorem~\ref{thm:odd-geometric-density} now cover every local density regime.

\begin{proof}[Proof of Proposition~\ref{prop:local-endpoint}]
Recall the exact definition
\[
 \delta=\frac{|E||F|}{R^{2d}}.
\]
If $\delta\geq R^{-\kappa_d}$,
Theorem~\ref{thm:analytic-density} gives
\eqref{eq:local-target-intro}.  Suppose instead that
$\delta<R^{-\kappa_d}$.  Then
\begin{equation}\label{eq:sparse-product-range}
 |E||F|<R^{2d-\kappa_d}.
\end{equation}

Suppose first that $d=4$.  Directly from
\eqref{eq:kappa-def}, $\kappa_4=3$, and hence
\[
 |E||F|<R^5.
\]
Interchange $E$ and $F$ if necessary so that $|E|\leq|F|$.
If $|E|\leq R^2$, Theorem~\ref{thm:geometric-density} applies with
$C_0=1$.  If $|E|>R^2$, Theorem~\ref{thm:d4-bridge} applies with
$C_1=1$.

Now suppose that $d$ is even and $d\geq6$.  Since
\begin{equation}\label{eq:kappa-minus-d}
 \kappa_d-d=\frac{d-6}{d-2}\geq0,
\end{equation}
\eqref{eq:sparse-product-range} gives
\[
 |E||F|<R^{2d-\kappa_d}\leq R^d,
 \qquad
 \min\{|E|,|F|\}<R^{d/2}.
\]
By symmetry, Theorem~\ref{thm:geometric-density} applies with the smaller
set in the role of $E$ and with $C_0=1$.

Finally, suppose that $d$ is odd.  Interchange $E$ and $F$ if necessary so
that $|E|\leq|F|$.  Theorem~\ref{thm:odd-geometric-density} applies to
\eqref{eq:sparse-product-range} with $C_1=1$.

In every geometric case the incidence estimate is exactly equivalent to
\eqref{eq:local-target-intro}.  Indeed,
\eqref{eq:calA-rE} gives
\[
 R^{-d}\langle\cA_R\one_E,\one_F\rangle
 =R^{2-2d}I_R(E,F),
\]
and substituting
$|E||F|=R^{2d}\delta$ into
\[
 I_R(E,F)\lesssim_{d,\eps}
 R^{\frac{2(d-2)}{d+2}+\eps}
 (|E||F|)^{\frac d{d+2}}
\]
produces precisely $R^\eps\delta^{d/(d+2)}$.  This completes the proof.
\end{proof}

\noindent\textbf{Completion of the global restricted estimate.}
Proposition~\ref{prop:local-endpoint} is precisely the local estimate required
in the localization reduction preceding \eqref{eq:local-target-intro}.
Therefore the global restricted estimate \eqref{eq:global-restricted} holds.
Equivalently,
\begin{equation}\label{eq:restricted-lorentz}
 \|\cA_R\|_{\ell^{\frac{d+2}{d},1}\to\ell^{\frac{d+2}{2},\infty}}
 \lesssim_{d,\eps}R^{-\frac{d(d-2)}{d+2}+\eps}.
\end{equation}

For $d\geq5$, \eqref{eq:sphere-cardinality} transfers this estimate to
$A_R$.  In dimension four, the same conclusion follows from
\eqref{eq:four-square-lower}.  Hence
\begin{equation}\label{eq:restricted-lorentz-probability}
 \|A_R\|_{\ell^{\frac{d+2}{d},1}\to\ell^{\frac{d+2}{2},\infty}}
 \lesssim_{d,\eps}R^{-\frac{d(d-2)}{d+2}+\eps}.
\end{equation}

\section{Interpolation and completion}

To complete the proof, we combine
\eqref{eq:restricted-lorentz-probability} with the elementary bounds
\[
 \|A_R\|_{\ell^1\to\ell^\infty}
 =|S_R|^{-1}
 \lesssim_d R^{-(d-2)},
 \qquad
 \|A_R\|_{\ell^2\to\ell^2}\leq1.
\]
The first bound follows from \eqref{eq:sphere-cardinality} when $d\geq5$
and from \eqref{eq:four-square-lower} when $d=4$; the second follows because
$A_R$ is convolution with a probability measure.

A standard two-step interpolation argument upgrades
\eqref{eq:restricted-lorentz-probability} to strong type at the endpoint.
First, off-diagonal Marcinkiewicz interpolation between the restricted
endpoint, invoked with a sufficiently small fraction of the prescribed
$\eps$ loss, and the $\ell^1\to\ell^\infty$ bound gives a strong estimate
at a nearby point on the conjugate line.  Interpolating this estimate with
the $\ell^2$ bound returns to the endpoint.  Since
$d(2/p-1)$ is affine in $1/p$ and vanishes at $p=2$, the principal powers
of $R$ interpolate exactly, while the additional loss is absorbed into
$R^\eps$.  Hence
\begin{equation}\label{eq:strong-endpoint}
 \|A_R f\|_{\ell^{\frac{d+2}{2}}}
 \lesssim_{d,\eps}
 R^{-\frac{d(d-2)}{d+2}+\eps}
 \|f\|_{\ell^{\frac{d+2}{d}}}.
\end{equation}

Interpolating \eqref{eq:strong-endpoint} with the $\ell^2$ bound gives
\[
 \|A_R f\|_{\ell^{p'}}
 \lesssim_{d,p,\eps}
 R^{-d(2/p-1)+\eps}\|f\|_{\ell^p},
 \qquad
 \frac{d+2}{d}\leq p\leq2.
\]
This is \eqref{eq:main-estimate}.  Remark~\ref{rem:sharpness} proves the
asserted sharpness and completes the proof of Theorem~\ref{thm:main}.

\begin{proof}[Proof of Corollary~\ref{cor:fixed-distance-incidence}]
By \eqref{eq:calA-rE} and the global restricted estimate
\eqref{eq:global-restricted},
\begin{align*}
 I_R(E,F)
 &=R^{d-2}\langle\cA_R\one_E,\one_F\rangle\\
 &\lesssim_{d,\eps}
 R^{d-2-\frac{d(d-2)}{d+2}+\eps}
 (|E||F|)^{\frac d{d+2}}\\
 &=R^{\frac{2(d-2)}{d+2}+\eps}
 (|E||F|)^{\frac d{d+2}}.
\end{align*}
This is \eqref{eq:fixed-distance-incidence}.  Taking $F=E$ gives the final
assertion.
\end{proof}

\section*{Acknowledgments}
R.H. was partially supported by NSF DMS-2143369.
ChatGPT was used to assist the authors with computing the integer moment
in Lemma~\ref{lem:integer-moments},
and minor editing and polishing throughout the paper.

\end{document}